\documentclass[a4paper, 11pt]{amsart} 

\usepackage[letterpaper,margin=1in]{geometry}
\usepackage{etex,float}
\usepackage{amsmath,amssymb,amsthm,amsfonts,mathrsfs, mathtools}
\usepackage[frame,cmtip,arrow,matrix,line,graph,curve]{xy}
\usepackage{graphpap,color,paralist,pstricks}
\usepackage[mathscr]{eucal}
\usepackage{mathabx}
\usepackage[pdftex,colorlinks,backref=page,citecolor=blue]{hyperref}
\usepackage{tikz}
\usetikzlibrary{calc,decorations.markings}
\usepackage{epic,eepic}
\usepackage{yfonts}
\usepackage{enumitem} 
\usepackage{bbm}
\usepackage{tikz-cd}
\usepackage{aliascnt}
\usepackage{mathbbol}

\allowdisplaybreaks

\usepackage{xcolor, color, soul}

\usepackage{color}

\newtheorem{theorem}{Theorem}[section]

\newaliascnt{headcor}{headthm}

\aliascntresetthe{headcor}

\newaliascnt{headconj}{headthm}

\aliascntresetthe{headconj}

\newaliascnt{corollary}{theorem}
\newtheorem{corollary}[corollary]{Corollary}
\aliascntresetthe{corollary}

\newaliascnt{claim}{theorem}

\aliascntresetthe{claim}

\newaliascnt{lemma}{theorem}
\newtheorem{lemma}[lemma]{Lemma}
\aliascntresetthe{lemma}

\newaliascnt{thmdfn}{theorem}
\newtheorem{thmdfn}[thmdfn]{Theorem/Definition}
\aliascntresetthe{thmdfn}

\newaliascnt{conjecture}{theorem}
\newtheorem{conjecture}[conjecture]{Conjecture}
\aliascntresetthe{conjecture}

\newaliascnt{proposition}{theorem}
\newtheorem{proposition}[proposition]{Proposition}
\aliascntresetthe{proposition}

\theoremstyle{definition}
\newaliascnt{definition}{theorem}
\newtheorem{definition}[definition]{Definition}
\aliascntresetthe{definition}

\newaliascnt{notation}{theorem}
\newtheorem{notation}[notation]{Notation}
\aliascntresetthe{notation}

\newaliascnt{example}{theorem}
\newtheorem{example}[example]{Example}
\aliascntresetthe{example}

\newaliascnt{examples}{theorem}

\aliascntresetthe{examples}

\newaliascnt{remark}{theorem}
\newtheorem{remark}[remark]{Remark}
\aliascntresetthe{remark}

\newaliascnt{fact}{theorem}

\aliascntresetthe{fact}

\newaliascnt{question}{theorem}

\aliascntresetthe{question}

\newaliascnt{questions}{theorem}

\aliascntresetthe{questions}

\newaliascnt{problem}{theorem}

\aliascntresetthe{problem}

\newaliascnt{construction}{theorem}

\aliascntresetthe{construction}

\newaliascnt{setup}{theorem}

\aliascntresetthe{setup}

\newaliascnt{algorithm}{theorem}

\aliascntresetthe{algorithm}

\newaliascnt{observation}{theorem}

\aliascntresetthe{observation}

\newaliascnt{discussion}{theorem}

\aliascntresetthe{discussion}

\newaliascnt{defprop}{theorem}

\aliascntresetthe{defprop}

\def\sectionautorefname~#1\null{Section #1\null}
\def\subsectionautorefname~#1\null{\S #1\null}

\makeatletter
\def\equationautorefname~#1\null{Equation\,(#1)\null}
\makeatother

\definecolor{myorange}{RGB}{255, 160, 70}
\definecolor{darkgreen}{RGB}{0, 166, 0}

\def \pol{{\operatorname{pol}}}
\def \conv{{\operatorname{conv}}}

\def \cone{{\operatorname{cone}}}
\def \Vol{{\operatorname{Vol}}}

\def \MV{{\operatorname{MV}}}

\def \gr{{\operatorname{gr}}}

\DeclareMathOperator{\pyr}{pyr}

\DeclareMathOperator{\bdpyr}{pyr}

\DeclareMathOperator{\epy}{epy}

\def \latti{{\operatorname{Latt}}}
\newcommand{\Lat}[1]{\latti\bigl({#1}\bigr)}
\newcommand{\Latt}[1]{\latti\bigl({#1}\bigr)}

\def \0{\mathbf 0}
\def \fv{\mathbf v}
\def \fx{\mathbf x}

\def \fu{\mathbf u}
\def \f1{\mathbf{1}}
\def\ba{\mathbf{a}}
\def\bb{\mathbf{b}}

\def\be{\mathbf{e}}

\newcommand{\kk}{\mathbb{k}}

\def\xi{x}

\def\ls{\leqslant}
\def\gs{\geqslant}

\def\fm{\mathfrak{m}}
\def\frp{\mathfrak{p}}

\def\fp{\mathbf{p}}
\def\fq{\mathbf{q}}

\def\fq{\mathbf{q}}

\def\fc{\mathfrak{c}}

\def\bn{\mathbf{n}}

\def \QQ{\mathbb Q}
\def \RR{\mathbb R}

\def \NN{ {\mathbb Z}_{\gs 0}}

\def \ZZ{\mathbb Z}

\def \C{\mathcal C}

\def \D{\mathcal D}

\def \R{\mathcal R}

\def \M{\mathcal M}

\def \L{\mathcal L}

\def \F{\mathcal F}

\def\fx{\mathbf{x}}

\def\fe{\mathbf{e}}

\begin{document}
	
	\title[The multiplicity sequence of monomial ideals]{The multiplicity sequence of monomial ideals}

	\author[Sudipta Das]{Sudipta Das}
	\address{Sudipta Das\\
		Dr Homi Bhabha Rd, TIFR, Navy Nagar, Colaba, Mumbai, Maharashtra, 400005, India. \emph{Email:} {\rm sudiptad@math.tifr.res.in}}

	\author[Jonathan Monta{\~n}o]{Jonathan Monta{\~n}o}
	\address{Jonathan Monta{\~n}o\\School of Mathematical and Statistical Sciences, Arizona State University, P.O. Box 871804, Tempe, AZ 85287-18041, USA. \emph{Email:} {\rm montano@asu.edu}}

	\author[Aniketh Sivakumar]{Aniketh Sivakumar}
	\address{Aniketh Sivakumar\\Mathematics Department, Tulane University, 6823 St. Charles Avenue, New Orleans, LA 70118, USA. \emph{Email:} {\rm asivakumar@tulane.edu}}

	\begin{abstract}
		We give a convex-geometric formula for the multiplicity sequence of a monomial ideal in terms of mixed volumes of polytopes constructed from its Newton polyhedron. We also construct a counterexample to a conjecture of Achilles and Manaresi proposing a different volume formula for the multiplicity sequence. Finally, we derive a mixed-volume formula for the mixed multiplicities of arbitrary monomial ideals.
	\end{abstract}

	\keywords{}
	\subjclass[2020]{Primary: 	13H15, 13F55, 52B20, 52A39;    Secondary: 	05E40,	13A30.}

	\maketitle

	\section{Introduction}
	
	The \emph{multiplicity sequence} of an ideal $I$ in a Noetherian local ring $(R,\fm)$ is the sequence of multidegrees of the bigraded algebra
	$
	\gr_{\fm}(\gr_I(R)),
	$
	obtained by taking the associated graded algebra with respect to $\fm$ of the associated graded algebra $\mathrm{gr}_I(R)$ of $R$ with respect to $I$. This sequence was introduced by Achilles and Manaresi in \cite{AchillesManaresibigraded} in connection with St\"{u}ckrad-Vogel intersection theory, and has since been studied  by both algebraic geometers and commutative algebraists; see, for example, \cite{AchillesRams01,callejas-perez-mult-seq2010,CidRPolUlr24,Ciuperca03,GaffneyGassler99,PolinietalMultSeq}.
	
	Given ideals $J\subseteq I$, a fundamental problem is to find numerical characterizations for the equality $\overline{J}=\overline{I}$ of their integral closures. When $I$ and $J$ are $\fm$-primary, Rees proved, under mild assumptions on $R$, that $\overline{J}=\overline{I}$ if and only if $I$ and $J$ have the same Hilbert-Samuel multiplicity \cite{rees1961transforms}. For ideals that are not necessarily $\fm$-primary, analogous results were later obtained using the $j$-multiplicity \cite{FlennerManaresi01} and the $\varepsilon$-multiplicity \cite{KatzValidashti10}, although the latter results require localizing $R$ at a suitable set of prime ideals. In the analytic setting, Gaffney and Gassler showed that the multiplicity sequence characterizes integral dependence \cite{GaffneyGassler99}. This result was later proved in general by Polini, Trung, Ulrich, and Validashti \cite{PolinietalMultSeq}, with one implication having previously been shown by Ciuperc\u{a} \cite{Ciuperca03}.
	
	The role of the multiplicity sequence in detecting integral dependence motivates the search for explicit formulas in concrete settings. For monomial ideals, there is a long history of results giving convex-geometric formulas for  multiplicities in terms of the Newton polyhedron of the ideal. Teissier obtained such a formula for the Hilbert-Samuel multiplicity \cite{TeissierMonomial}, while Jeffries and the second author obtained corresponding formulas for the $j$- and $\varepsilon$-multiplicities \cite{JMULT_MONOMIAL}. In a more geometric setting, Aluffi expressed the Segre class of a monomial scheme as an integral defined from  its Newton polyhedron  \cite{AluffiIntegral16}.  
	The main result of this paper provides a formula of this kind for the entire multiplicity sequence, expressed in terms of mixed volumes of polytopes constructed from the Newton polyhedron of the ideal; see \autoref{thm:main}.
	
	In \cite{AchillesManaresi2022}, Achilles and Manaresi proposed a different formula for the multiplicity sequence of a monomial ideal, also in terms of data  from its Newton polyhedron. They verified their conjecture in dimensions at most two and provided supporting examples in higher dimensions. Our next main result shows that the Achilles-Manaresi conjecture fails in dimension three; see \autoref{thm:AM-conj-false}.
	
	A closely related family of numerical invariants is given by the \emph{mixed multiplicities} associated to ideals $J,I_1,\ldots,I_r$, where $J$ is $\fm$-primary. These invariants are defined as the multidegrees of $\gr_J(\R(I_1,\ldots, I_r))$, the associated graded algebra with respect to $J$ of the multi-Rees algebra of $I_1,\ldots,I_r$, and have a wide range of applications; see \cite{TrungVermaSurvey10} for a survey on  this topic. Trung and Verma showed in \cite{TRUNG_VERMA_MIXED_VOL} that mixed volumes of polytopes can be realized as mixed multiplicities of certain monomial ideals generated in a single degree. To the best of our knowledge, however, convex-geometric formulas for the mixed multiplicities of arbitrary monomial ideals have not previously appeared in the literature. Our final main result fills this gap by providing such a formula; see \autoref{thm:mixed_mult}.

	
	\section{Algebraic preliminaries}\label{s:prelims}
	
	Although some of the definitions and results presented in this section  hold in greater generality, we restrict the exposition to the setting of our main results.

	
	\subsection{The multiplicity sequence of an ideal}\label{b:multSeq}

	Let $\kk$ be a field and let $R=\kk[x_1,\ldots, x_d]_{(x_1,\ldots, x_d)}$, with $d>0$, be a polynomial ring localized at its  homogeneous maximal ideal. Let $\fm=(x_1,\ldots, x_d)$ be the maximal ideal of $R$. 
	
	Let $I\subseteq R$ be a nonzero  ideal and  $\gr_I(R)=\bigoplus_{u\gs 0}I^u/I^{u+1}$ the   associated graded algebra of $R$ with respect to $I$. Consider the bigraded  algebra
	$$T=\gr_{\fm}(\gr_I(R))=\bigoplus_{u,v\gs 0}\frac{\fm^v I^u+I^{u+1}}{\fm^{v+1} I^u+I^{u+1}}.$$
	The second sum transform of the Hilbert function of $T$ is 
	\begin{equation*}\label{eq:H11}
		H^{(1,1)}_T(u,v)=\sum_{i=0}^u\sum_{j=0}^v\lambda_R\left(\frac{\fm^j I^i+I^{i+1}}{\fm^{j+1} I^i+I^{i+1}}\right)=\sum_{i=0}^u\lambda_R\left(\frac{I^i}{\fm^{v+1} I^i+I^{i+1}}\right).
	\end{equation*}
	It is well-known that  $H^{(1,1)}_T(u,v)$ agrees with a polynomial  $P_T^{(1,1)}(u,v)\in \QQ[u,v]$ of degree $d$   for $u,v\gg 0$ \cite{AchillesManaresibigraded}. 
	
	\begin{definition}\label{def:mult_seq}
		Writing
		$$P_T^{(1,1)}(u,v)=	\sum_{i=0}^{d}\frac{c_i(I)}{(d-i)!i!}\,u^{d-i}v^i
		+(\text{lower terms}),$$
		the coefficients $c_0(I),\cdots, c_{d}(I)$ are  nonnegative integers collectively called the \emph{multiplicity sequence of $I$}.
	\end{definition}

	\begin{remark}\label{rem:c0}
		The coefficient $c_0(I)$ coincides with the \emph{$j$-multiplicity of $I$}, originally defined in \cite{ACHILLES_MANARESI_J_MULT}. 
	\end{remark}
	
	\begin{remark}\label{rem:cd}
		The coefficient $c_d(I)$ is  zero whenever $I$ has positive codimension, which is always the case in our setting \cite[Proposition 2.3]{AchillesManaresibigraded}. Therefore, the polynomial 
		\[
		P_T^{(1,1)}(u,v)-P_T^{(1,1)}(u-1,v)=	\sum_{i=0}^{d-1}\frac{c_i(I)}{(d-i-1)!i!}\,u^{d-i-1}v^i
		+(\text{lower terms}),
		\]
		which agrees with $
		\lambda_R\left(\frac{I^u}{\fm^{v+1} I^u+I^{u+1}}\right)$ for $u,v\gg 0$, contains all the nonzero coefficients of  the multiplicity sequence.
	\end{remark}
	
	\begin{remark}\label{rem:notation_mult_seq}
		We follow the  notation from \cite{AchillesManaresibigraded, AchillesManaresi2022}.  In \cite{PolinietalMultSeq} the authors use the reverse ordering for the multiplicity sequence, i.e., their $c_i(I)$ is our $c_{d-i}(I)$.
	\end{remark}

	The following lemma shows that the  multiplicity sequence can  be computed via the filtration of integral closures of  powers. We denote by $\overline{J}$ the integral closure of an ideal $J$ of $R$. 
	
	\begin{lemma}\label{l:passing_to_int_closure}
		For $u,v\gg 0$ we have 
		$$\lambda_R\left(\frac{\overline{I^u}}{\fm^{v} \overline{I^u}+\overline{I^{u+1}}}\right)=
		\sum_{i=0}^{d-1}\frac{c_i(I)}{(d-i-1)!i!}\,u^{d-i-1}v^i+(\text{lower  terms}).
		$$
	\end{lemma}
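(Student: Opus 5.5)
We compare the two-variable length functions attached to the $I$-adic filtration and to the integral closure filtration $\{\overline{I^u}\}$. Since $R$ is a localization of a polynomial ring, it is analytically unramified, so the Rees algebra $\overline{\R}=\bigoplus_u \overline{I^u}t^u$ is a finite module over $\R=\R(I)=\bigoplus_u I^ut^u$. Hence there is $k$ with $\overline{I^{u}}=I^{u-k}\overline{I^{k}}$ for $u\gs k$; in particular $I^u\subseteq\overline{I^u}\subseteq I^{u-k}$ for all $u\gs k$.

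The plan is to work with the associated graded modules. Put $G=\gr_I(R)$ and $\overline{G}=\bigoplus_u \overline{I^u}/\overline{I^{u+1}}$. Then $\overline{G}=\overline{\R}/\overline{\R}_{+}(1)$-type quotient, a finitely generated graded module over $\R$ annihilated by $I\R\cdot$ (more precisely $\overline{G}$ is a finite module over $\overline{\R}/I\overline{\R}$, hence over $G'=\R/I\R$-like rings after passing through $\overline{\R}/\overline{\R}(1)$). The quantity in the lemma is $\lambda_R(\overline{G}_u/\fm^v\overline{G}_u)$, so the lemma says the leading part of the bivariate function $(u,v)\mapsto\lambda(\overline{G}_u/\fm^v\overline{G}_u)$ agrees with that of $(u,v)\mapsto\lambda(G_u/\fm^vG_u)$ (which by \autoref{rem:cd} is the displayed polynomial, up to the harmless shift $v\leftrightarrow v+1$ that only changes lower terms). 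Both are Hilbert functions of $\gr_\fm$ of finitely generated graded modules over the standard graded algebra $\R(I)\otimes R/\fm$-type bigraded ring $\gr_\fm(\R)$, so each is eventually a polynomial of total degree $\ls d-1$ once we know dimensions. The leading degree-$(d-1)$ parts are determined by multiplicity symbols (bidegrees) of the top-dimensional components, and these are additive on short exact sequences.

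Key steps: (1) Consider $\overline{\R}/\R$, a finite graded $\R$-module. Since $R$ is a domain, $\R$ and $\overline{\R}$ have the same rank one over $\R$, so $\dim \overline{\R}/\R\ls d$. (2) Use the exact sequences $0\to \R(1)\xrightarrow{t^{-1}}\ \R\to \ldots$; more concretely, the standard argument: $\lambda(\overline{I^u}/\fm^v\overline{I^u}+\overline{I^{u+1}})$ is the first difference in $u$ of $F(u,v)=\lambda(\overline{\R}_{u}/ \ldots)$; better, note $\sum_{i\ls u}\lambda(\overline G_i/\fm^v\overline G_i)$ and the analogous sum for $G$ are $\lambda$ of $\overline{\R}/(\fm^v\overline{\R}+\overline{\R}_{>u})$ modulo corrections, and compare $\R$ and $\overline{\R}$ as modules over $\R$ graded by $u$ and filtered $\fm$-adically. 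The difference of the corresponding bivariate functions is controlled by the module $\overline{\R}/\R$ of dimension $\ls d$ (as $\R$ has dimension $d+1$), so its bivariate Hilbert polynomial has total degree $\ls d-1$, one less than that of $\R$, which is $d$. (3) Take first differences in $u$: degree-$d$ parts of the summed functions coincide, so the degree-$(d-1)$ parts of $\lambda(\overline{G}_u/\fm^v\overline G_u)$ and $\lambda(G_u/\fm^vG_u)$ coincide, giving the claim via \autoref{rem:cd}.

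The main obstacle is step (2): making precise that the length functions of ``$\fm^v$ on degree $u$'' are bivariate Hilbert functions of $\gr_{\fm}$ of finitely generated bigraded modules over a Noetherian bigraded ring, and that their total degrees are bounded by dimension. I would handle it by noting $\lambda(M_u/\fm^vM_u)$ for a finite graded $\R$-module $M$ is the Hilbert function of $\bigoplus_u\bigoplus_{j<v}\fm^jM_u/\fm^{j+1}M_u$ summed in $v$, a finite module over $\gr_{\fm\R}(\R)$, whose dimension equals $\dim M$; additivity of leading coefficients in exact sequences $0\to\R\to\overline\R\to\overline\R/\R\to0$ (up to Artin–Rees shifts affecting only lower terms) then finishes it.
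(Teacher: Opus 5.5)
There is a genuine gap between your steps (2) and (3). Artin--Rees applied to $0\to\R\to\overline{\R}\to\overline{\R}/\R\to 0$ does show that $\lambda(\overline{I^u}/\fm^v\overline{I^u})$ and $\lambda(I^u/\fm^vI^u)$ have the same degree-$d$ part. That is correct but does not give the lemma. The length in the lemma is \emph{not}, even to top order, a first difference in $u$ of these functions. Nor are its partial sums $\sum_{i\ls u}\lambda(\overline{G}_i/\fm^v\overline{G}_i)$ equal to $\lambda\bigl(\overline{\R}/(\fm^v\overline{\R}+\overline{\R}_{>u})\bigr)=\sum_{i\ls u}\lambda(\overline{I^i}/\fm^v\overline{I^i})$ ``modulo corrections''. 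Indeed,
\[
\lambda\bigl(I^u/(\fm^vI^u+I^{u+1})\bigr)=\lambda(I^u/\fm^vI^u)-\lambda\bigl(I^{u+1}/(I^{u+1}\cap\fm^vI^u)\bigr),
\]
and Artin--Rees only traps $I^{u+1}\cap\fm^vI^u$ between $\fm^vI^{u+1}$ and $\fm^{v-c}I^{u+1}$. Since $\R$ (hence $\R_+(1)$) has dimension $d+1$, this uncertainty is of order $d-1$, the same order as the quantity you want.

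Concretely, take $I=(x)\subset\kk[x,y]_{(x,y)}$. Then $\lambda(I^u/\fm^vI^u)=\binom{v+1}{2}$ for all $u$, so its first difference in $u$ is $0$. But $\lambda\bigl(I^u/(\fm^vI^u+I^{u+1})\bigr)=v$ and $c_1(I)=1$. Likewise the two ``summed'' functions are $(u+1)\binom{v+1}{2}$ and $(u+1)v$, which differ in top degree. The underlying reason is that the leading form of $\lambda(I^u/\fm^vI^u)$ encodes the mixed multiplicities $e_{(n_0,n_1)}(\fm|I)$ of \autoref{s:mixed_mult}, not $\gr_{\fm}(\gr_I(R))$.

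What is missing is a comparison of $G$ with $\overline{G}$ itself, in which the discrepancy is carried by modules of dimension at most $d$. The paper gets this from the identity you already wrote, $\overline{I^{u}}=I^{u-k}\overline{I^{k}}$. It identifies $\overline{G}$, up to a shift by $k$, with $\gr_I(\overline{I^k})$. It then uses additivity of multiplicity sequences of modules along $0\to\overline{I^k}\to R\to R/\overline{I^k}\to 0$; the third term contributes nothing in degree $d-1$ because $\dim R/\overline{I^k}<d$.

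Your $\overline{\R}/\R$ idea can also be made to work. Apply the snake lemma to the injections $t^{-1}\colon\R_+(1)\to\R$ and $t^{-1}\colon\overline{\R}_+(1)\to\overline{\R}$, whose cokernels are $G$ and $\overline{G}$. With $N=\overline{\R}/\R$ this gives an exact sequence $0\to K\to G\to\overline{G}\to C\to 0$, where $K$ and $C$ are the kernel and cokernel of $t^{-1}\colon N(1)\to N$. Every module whose Artin--Rees error now enters ($K$, $G/K$, and the image of $t^{-1}$ in $N$) has dimension at most $d$, so those errors are of order at most $d-2$. Hence $\lambda(C_u/\fm^vC_u)-\lambda(K_u/\fm^vK_u)$ agrees to that order with the first difference in $u$ of $\lambda(N_u/\fm^vN_u)$, a function of degree at most $d-1$. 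It follows that $\lambda(\overline{G}_u/\fm^v\overline{G}_u)-\lambda(G_u/\fm^vG_u)$ has degree at most $d-2$, as required. So your first-difference trick is legitimate when applied to $N$, not to $\R$.
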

	\begin{proof}
		By \cite[Corollary~9.2.1]{HunekeSwansonIntegral}  there exists $l\in \NN$ such that $\overline{I^{n+l}}=I^n\overline{I^l}$  for every $n\gs 0$. 
		Multiplicity sequences  can be defined with respect to a module \cite[Definition~2.4]{callejas-perez-mult-seq2010}, and the resulting invariants are additive on short exact sequences   \cite[Theorem~4.1]{callejas-perez-mult-seq2010}. Therefore, by computing multiplicity sequences along   the short exact sequence 
		$$0\to\overline{I^l}\to R\to R/\overline{I^l}\to 0,$$ using the fact that $\dim\bigl(R/\overline{I^l}\bigr)<d$, and \autoref{rem:cd}, we obtain the desired conclusion.
	\end{proof}


	\subsection{Mixed multiplicities of ideals}\label{b:mixMult} 
	
	Under the assumptions of \autoref{b:multSeq} let $J, I_1,\ldots, I_r\subseteq R$ be nonzero ideals with $J$ $\fm$-primary. Let $\R(I_1,\ldots, I_r)=\oplus_{u_1,\ldots u_r\gs 0}I_1^{u_1}\cdots I_r^{u_r}$  be the \emph{multi-Rees algebra} of $I_1,\ldots, I_r$ and consider the $\NN^{r+1}$-graded algebra
	$$
	S=\gr_J\bigl(\R(I_1,\ldots, I_r)\bigr) = \bigoplus_{u_0,\ldots u_r\gs 0} \frac{J^{u_0}I_1^{u_1}\cdots I_r^{u_r}}{J^{u_0+1}I_1^{u_1}\cdots I_r^{u_r}}.
	$$
	By \cite[\S 1]{TRUNG_VERMA_MIXED_VOL} the Hilbert polynomial of $S$ has the form
	\begin{equation}\label{eq:HilberPolS}
		P_S(u_0,\ldots, u_r)=	\sum_{n_i\gs 0,\,  \sum_in_i=d-1}\frac{e_{(n_0,\ldots, n_r)}(J|I_1,\ldots I_r)}{n_0!\cdots n_r!}\,u_0^{n_0}\cdots u_{r}^{n_r} 
		+(\text{lower terms}).
	\end{equation}
	\begin{definition}\label{def:mixed_muts}
		The nonnegative integers $e_{(n_0,\ldots, n_r)}(J|I_1,\ldots I_r)$  in \autoref{eq:HilberPolS} for $n_i\gs 0$, $\sum_in_i=d-1$ are called the \emph{mixed multiplicities} of $J,I_1,\ldots I_r$. 
	\end{definition}

	Like the multiplicity sequence, the mixed multiplicities can also be computed via the integral closures of powers, as the following lemma shows.
	
	\begin{lemma}\label{l:passing_to_int_closure_mixed_mult}
		For $u_0,\ldots, u_r\gg 0$ we have 
		$$\lambda_R\left(\frac{\overline{J^{u_0}I_1^{u_1}\cdots I_r^{u_r}}}{\overline{J^{u_0+1}I_1^{u_1}\cdots I_r^{u_r}}}\right)=
		\sum_{n_i\gs 0,\,  \sum_in_i=d-1}\frac{e_{(n_0,\ldots, n_r)}(J|I_1,\ldots I_r)}{n_0!\cdots n_r!}\,u_0^{n_0}\cdots u_{r}^{n_r} 
		+(\text{lower terms}).
		$$
	\end{lemma}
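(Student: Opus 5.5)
We reduce to the non-closed filtration by the same reduction as in \autoref{l:passing_to_int_closure}, now in the multigraded setting. The idea is to realize the integral closures as a finite module over the multi-Rees algebra $\R(J,I_1,\ldots,I_r)$. We then compare Hilbert polynomials through a short exact sequence whose error term has dimension less than $d$.

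The first step is to find a uniform shift. The ring $R$ is a localization of a finitely generated $\kk$-algebra, so it is analytically unramified. Hence the integral closure $\overline{\R}$ of the multi-Rees algebra $\R(J,I_1,\ldots,I_r)$ in $R[t_0,\ldots,t_r]$ is a finite module over $\R(J,I_1,\ldots,I_r)$ \cite[Corollary~9.2.1]{HunekeSwansonIntegral}, in the multigraded version. Therefore there exists $l\in\NN$ such that, writing $K=J^{l}I_1^{l}\cdots I_r^{l}$,
$$
\overline{J^{u_0+l}I_1^{u_1+l}\cdots I_r^{u_r+l}}
=J^{u_0}I_1^{u_1}\cdots I_r^{u_r}\,\overline{K}
$$
for all $u_i\gs 0$. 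Concretely, we choose $l$ at least the maximal degree of a finite set of homogeneous generators of $\overline{\R}$, and raise it if needed. After the harmless shift $u_i\mapsto u_i-l$, which does not change the top-degree homogeneous part of the polynomial, the left-hand side of the lemma becomes the Hilbert function of $\gr_J\bigl(\R(I_1,\ldots,I_r)\otimes \overline{K}\bigr)$. That is, it is the mixed-multiplicity Hilbert function of the module $M=\overline{K}$ in place of $R$:
$$
\lambda_R\!\left(\frac{J^{u_0}I_1^{u_1}\cdots I_r^{u_r}M}{J^{u_0+1}I_1^{u_1}\cdots I_r^{u_r}M}\right).
$$

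The second step is to show additivity. Mixed multiplicities are defined for any finitely generated $R$-module $N$ through the Hilbert polynomial of $\bigoplus J^{u_0}\mathbf I^{\mathbf u}N/J^{u_0+1}\mathbf I^{\mathbf u}N$, and the top-degree part (total degree $d-1$) is additive on short exact sequences. This is the step I expect to be the main obstacle, because the filtrations $J^{u_0}\mathbf I^{\mathbf u}N$ do not restrict to submodules exactly. I would handle it with Artin–Rees in the multigraded form. For $0\to N'\to N\to N''\to0$, the induced filtration $J^{u_0}\mathbf I^{\mathbf u}N\cap N'$ is a good filtration, so it is squeezed between $J^{u_0}\mathbf I^{\mathbf u}N'$ and $J^{u_0-c}\mathbf I^{\mathbf u-c}N'$ for a fixed $c$. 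Comparing lengths shows that the top-degree parts agree. This is the standard argument, as in \cite{TRUNG_VERMA_MIXED_VOL} and the references there, and we could alternatively cite it directly.

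The third step is to conclude. We apply additivity to $0\to \overline{K}\to R\to R/\overline{K}\to 0$. Since $\dim R/\overline{K}<d$ (as $\overline K\neq0$ and $R$ is a domain), the Hilbert polynomial of the third term has total degree at most $\dim(R/\overline{K})-1<d-1$. Therefore $M=\overline{K}$ and $R$ have the same degree-$(d-1)$ part, namely the one in \autoref{eq:HilberPolS}. This gives the displayed formula for $u_0,\ldots,u_r\gg0$.
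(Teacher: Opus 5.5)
Your proof follows the paper's route. Module-finiteness of $\bigoplus_{\mathbf u}\overline{J^{u_0}I_1^{u_1}\cdots I_r^{u_r}}$ over $\R(J,I_1,\ldots,I_r)$ gives the uniform shift $\overline{J^{u_0}I_1^{u_1}\cdots I_r^{u_r}}=J^{u_0-l}I_1^{u_1-l}\cdots I_r^{u_r-l}\,\overline{K}$. Additivity of module mixed multiplicities along $0\to\overline{K}\to R\to R/\overline{K}\to 0$, together with $\dim R/\overline{K}<d$, then finishes the argument. In the first step you silently use that the graded pieces of the integral closure of the multi-Rees algebra are exactly the $\overline{J^{u_0}I_1^{u_1}\cdots I_r^{u_r}}$. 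The paper gets this, together with the finiteness, from \cite[Lemma 3.1]{DasRoy24} and \cite[Theorem 9.2.2]{HunekeSwansonIntegral}.

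The one part that does not work as written is your Artin--Rees sketch of additivity. Write $\mathbf I^{\mathbf u}=I_1^{u_1}\cdots I_r^{u_r}$ and $F_{u_0,\mathbf u}=J^{u_0}\mathbf I^{\mathbf u}N\cap N'$. The squeeze $J^{u_0}\mathbf I^{\mathbf u}N'\subseteq F_{u_0,\mathbf u}\subseteq J^{u_0-c}\mathbf I^{\mathbf u-\mathbf c}N'$ gives no inequality between $\lambda_R(F_{u_0,\mathbf u}/F_{u_0+1,\mathbf u})$ and $\lambda_R(J^{u_0}\mathbf I^{\mathbf u}N'/J^{u_0+1}\mathbf I^{\mathbf u}N')$, because lengths of successive quotients are not monotone in the filtration.

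In the classical $\fm$-primary case one passes to colengths $\lambda_R(N'/F_n)$, which are monotone. Here the only available sum transform is $\lambda_R(F_{0,\mathbf u}/F_{u_0+1,\mathbf u})$, whose top module moves with $\mathbf u$. Bounding it through the squeeze produces quotients such as $\mathbf I^{\mathbf u-\mathbf c}N'/\mathbf I^{\mathbf u}N'$, which typically have infinite length because the $I_i$ need not be $\fm$-primary.

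So this step needs either the additivity theorem itself or a genuinely different argument than a squeeze. The paper cites \cite[Corollary 3.9]{VietThanh14}, which you also offer to cite; with that citation in place of the sketch, your proof is complete.
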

	\begin{proof}
		By \cite[Lemma 3.1]{DasRoy24} and \cite[Theorem 9.2.2]{HunekeSwansonIntegral} the algebra $\oplus_{u_0,\ldots u_r\gs 0}\overline{J^{u_0}I_1^{u_1}\cdots I_r^{u_r}}$ is a finitely generated  $\R(J,I_1,\ldots, I_r)$-module. Thus, there exists $l\gg 0$ such that  for every $u_0,\ldots, u_r\gs l$ we have
		\begin{equation}\label{eq:powers_mixed_mult}
			\overline{J^{u_0}I_1^{u_1}\cdots I_r^{u_r}} = J^{u_0-l}I_1^{u_1-l}\cdots I_r^{u_r-l}\overline{J^lI_1^{l}\cdots I_r^{l}}.
		\end{equation}
		Mixed multiplicities  can be defined with respect to a module \cite[Note 2.1]{VietThanh14}, and the resulting invariants are additive on short exact sequences   \cite[Corollary 3.9]{VietThanh14}. Therefore,
		by computing mixed multiplicities  along   the short exact sequence 
		$$0\to\overline{J^lI_1^{l}\cdots I_r^{l}}\to R\to R/\overline{J^lI_1^{l}\cdots I_r^{l}}\to 0,$$
		using \autoref{eq:powers_mixed_mult}, and the fact that $\dim\bigl(R/\overline{J^lI_1^{l}\cdots I_r^{l}}\bigr)<d$, we obtain the desired conclusion.
	\end{proof}


	\section{Convex-geometric preliminaries}\label{s:conv_Geom} 
	
	For sets $A,B\subseteq \RR^d$ we denote by $A+B=\{a+b\mid a\in A,\, b\in B\}$ their Minkowski sum. We denote by $\Vol_d(A)$ the $d$-dimensional volume of $A$. 
	For simplicity, we use the following notation throughout the paper.
	
	\begin{notation}\label{not:lattice_counting}
		For a subset $S\subseteq \RR^d$, we write $\Lat{S}$ for the set of lattice points in $S$, i.e.,
		\[
		\Lat{S}= S\cap \ZZ^d.
		\]
	\end{notation}
	
	A function $f:\ZZ^d\to \QQ$ is a \emph{quasi-polynomial} if there exists a subgroup $\Lambda\subseteq \ZZ^d$ of finite index and one polynomial $P_{\overline{\bn}}$ for each class $\overline{\bn}=\bn+\Lambda \in \ZZ^d/\Lambda$ such that $f(\bn)=P_{\overline{\bn}}(\bn)$ for every $\bn\in \ZZ^d$.

	\begin{thmdfn}\label{thm:mixed_Ehrhart_structure}
		Let $\mathbf{Q}=Q_1,\ldots, Q_r\subseteq \RR^d$ be rational polytopes. The function  
		$$
		E_{\mathbf{Q}}(t_1,\ldots,t_r)
		:=
		\#\Lat{t_1Q_1+\cdots+t_rQ_r}
		$$
		coincides with a quasi-polynomial of degree $\dim(Q_1+\cdots+Q_r)$ for every $t_1,\ldots,t_r\in \NN$.
		The homogeneous degree $d$ term of $E_{\mathbf{Q}}(t_1,\ldots,t_r)$ has constant coefficients and it is equal to 
		$$\Vol_d(t_1Q_1+\cdots+t_rQ_r) = 
		\sum_{n_i\gs 0,\,  \sum_in_i=d}
		\binom{d}{n_1,\ldots, n_r}\MV_d\bigl(Q_1[n_1],\ldots, Q_r[n_r]\bigr)t_1^{n_1}\cdots t_r^{n_r},
		$$
		where $\MV_d\bigl(Q_1[n_1],\ldots, Q_r[n_r]\bigr)$ is the \emph{mixed volume} of the polytopes $Q_1,\ldots, Q_1,\ldots, Q_r,\ldots,Q_r$ with each $Q_i$ repeated $n_i$ times 
		
		Furthermore,  if $Q_1,\ldots, Q_r$  have lattice vertices, then $E_{\mathbf{Q}}(t_1,\ldots,t_r)$ has constant coefficients. 
	\end{thmdfn}
	\begin{proof}
		From \cite[Theorem 7 and \S4]{McMullenLattice78} it follows that $E_{\mathbf{Q}}(t_1,\ldots,t_r)$ is a quasi-polynomial of degree  $\dim(Q_1+\cdots+Q_r)$ whose coefficients in degree $d$ are constant. It also follows from this reference that $E_{\mathbf{Q}}(t_1,\ldots,t_r)$ has constant coefficients when $Q_1,\ldots, Q_r$ have lattice vertices.    
		By Riemann integration we have $\Vol_d(t_1Q_1+\cdots+t_rQ_r)=
		\lim_{t\to\infty} t^{-d}E_{\mathbf{Q}}(tt_1,\ldots,tt_r)$. The rest of the statement now follows from Minkowski Theorem; see for example 
		\cite[Theorem IV.3.2]{EwaldCombConvAlgGeom}.
	\end{proof}
	
	\begin{remark}\label{rem:mixed_vol}
		In the mixed volume definition in \autoref{thm:mixed_Ehrhart_structure} we have  $\MV_d\bigl(Q^{[d]}\bigr)=\Vol_d(Q)$. The reader is warned that some
		authors use the normalized  version, under which  mixed volumes are
		$d!$ times ours.
	\end{remark}
	
	\begin{remark}\label{rem:empty_zero_MV}
		If $B=\{\0\}$, then  $A+B=A$.   Therefore,  $\MV_d\bigl(Q_1[n_1],\ldots, Q_{r-1}[n_{r-1}], \{\0\}[n_r]\bigr)$ is equal to $\MV_d\bigl(Q_1[n_1],\ldots, Q_{r-1}[n_{r-1}]\bigr)$ when $n_r=0$, and equal to zero otherwise. 
		On the other hand, if $B=\emptyset$, then  $A+B=\emptyset$.    Therefore, we may assume by convention that $\MV_d\bigl(Q_1[n_1],\ldots, Q_{r-1}[n_{r-1}],  \emptyset[n_r]\bigr)=0$ for every $n_1,\ldots, n_r$.
	\end{remark}
	
	A cone   $C\subseteq \RR^d$ with apex at the origin $\0$  is \emph{pointed} if $C\cap (-C)=\{\0\}$. The cone $C$ is \emph{polyhedral} if $C=\RR_{\gs 0}\,\conv(\fu_1,\ldots, \fu_r)$ for some  $\fu_i\in \RR^d$. 
	
	The following lemma plays a fundamental role in the proof  of the main results.

	\begin{lemma}\label{l:convex_stuff}
		Let $C\subset \RR^d$ be a polyhedral pointed cone. Let $\Lambda_1=P_1+C$ and $\Lambda_2=P_2+C$ be two polyhedra in $\RR^d$, where $P_1$ and $P_2$ are polytopes. Further assume $\Lambda_2\subsetneq C$ and that $C\setminus \Lambda_2$ is bounded. Then, $C=\RR_{\gs 0}P_2$ and the following  equality holds 
		$$\Lambda_1\setminus (\Lambda_1+\Lambda_2)
		=
		\bigl(P_1+\conv(\0,P_2)\bigr)\setminus(P_1+P_2).
		$$
	\end{lemma}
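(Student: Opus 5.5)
The plan is to prove the two assertions separately. For the first, $C=\RR_{\gs 0}P_2$, I would use extreme rays. Since $\Lambda_2\subseteq C$ and $\0\in C$, we get $P_2\subseteq C$, so $\RR_{\gs 0}P_2\subseteq C$. Also $\0\notin\Lambda_2$: if $\0\in\Lambda_2$ then $C=\0+C\subseteq\Lambda_2+C=\Lambda_2$, contradicting $\Lambda_2\subsetneq C$. Hence $\0\notin P_2$, and $\RR_{\gs 0}P_2$ is a closed convex cone. Since $C$ is pointed and polyhedral, it is the conical hull of its extreme rays, so it suffices to show that every extreme ray $\rho=\RR_{\gs 0}\fu$ meets $P_2$. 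Because $C\setminus\Lambda_2$ is bounded, $t\fu\in\Lambda_2$ for $t\gg0$, so $t\fu=q+c$ with $q\in P_2\subseteq C$ and $c\in C$. A face of $C$ that contains a sum of two elements of $C$ contains both summands, so $q\in\rho$. Since $q\neq\0$, the ray $\rho$ meets $P_2$.

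For the equality, I would first record two consequences of $C=\RR_{\gs 0}P_2$. Set $D:=C\setminus\Lambda_2$.

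\emph{(a)} $D\subseteq\conv(\0,P_2)$. Indeed, any $x\in C$ can be written $x=\mu q$ with $q\in P_2$ and $\mu\gs0$. If $\mu\gs1$, then $x=q+(\mu-1)q\in P_2+C=\Lambda_2$. So every $x\in D$ has $\mu<1$, i.e. $x\in\conv(\0,P_2)$.

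\emph{(b)} $\Lambda_1+\Lambda_2=P_1+P_2+C$, using $C+C=C$.

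The inclusion ``$\subseteq$'' is now direct. Let $y\in\Lambda_1\setminus(\Lambda_1+\Lambda_2)$ and write $y=p_1+c$ with $p_1\in P_1$, $c\in C$. Then $c\notin\Lambda_2$, since otherwise $y\in P_1+\Lambda_2\subseteq\Lambda_1+\Lambda_2$. By (a), $y\in P_1+\conv(\0,P_2)$. Also $y\notin P_1+P_2$, because $P_1+P_2\subseteq\Lambda_1+\Lambda_2$.

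For ``$\supseteq$'', first note that $P_1+\conv(\0,P_2)\subseteq P_1+C=\Lambda_1$. So let $y=p_1+\lambda q$ with $\lambda\in[0,1]$, $q\in P_2$, and $y\notin P_1+P_2$. Suppose for contradiction that $y=p_1'+z$ with $p_1'\in P_1$ and $z\in\Lambda_2$. I would interpolate between the two decompositions: for $\mu\in[0,1]$ set
\[
p(\mu)=(1-\mu)p_1+\mu p_1'\in P_1,\qquad w(\mu)=(1-\mu)\lambda q+\mu z\in C .
\]
Then $y=p(\mu)+w(\mu)$ for every $\mu$, the path $w(\mu)$ starts in $\conv(\0,P_2)$ and ends in the closed set $\Lambda_2$. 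If $w(0)\in\Lambda_2$, take $\mu^*=0$; otherwise take $\mu^*=\min\{\mu\mid w(\mu)\in\Lambda_2\}$. In either case $w(\mu^*)\in\Lambda_2\cap\overline{D}$, where $\overline{D}$ denotes the closure of $D$ (for $\mu^*=0$ this holds because $w(0)=\lambda q$ is approached by $\lambda' q\in D$ with $\lambda'\to\lambda^-$). The goal is to show $w(\mu^*)\in P_2$, which gives $y\in P_1+P_2$, a contradiction.

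The main obstacle is this last step: showing that every point of $\Lambda_2$ lying in the closure of $C\setminus\Lambda_2$ already lies in $P_2$, i.e. that the ``inner'' boundary of $\Lambda_2$ relative to $C$ is contained in $P_2$. My first attempt would be to write $x=q'+c$ with $q'\in P_2$ and $c\in C\setminus\{\0\}$, and to show that a neighbourhood of $x$ in $C$ lies in $\Lambda_2$. This is clear when $c$ is interior to $C$, because $q'+(c+C)$ absorbs small perturbations. When $c$ lies on a proper face $F$ of $C$, I would exploit the freedom in the decomposition: using $C=\RR_{\gs 0}P_2$ and the convexity of $P_2$, I would try to slide $q'$ within $P_2$ so that the new $c$ lands in the relative interior of a face, and then argue face by face. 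If the point-set version proves too delicate, the fallback is to weaken it, for the purposes of the contradiction, to the claim that any $x\in\conv(\0,P_2)\cap\Lambda_2$ lies in $P_2+(C\cap\conv(\0,P_2))$, and then iterate the interpolation argument.
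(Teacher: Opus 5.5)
Your proof of $C=\RR_{\gs0}P_2$, facts (a) and (b), the inclusion ``$\subseteq$'', and the interpolation $y=p(\mu)+w(\mu)$ are all correct and follow the paper's route. The gap is exactly the step you flag: you never establish $w(\mu^*)\in P_2$, and neither the face-by-face sliding nor the fallback is an argument. Your reduction is also slightly mis-aimed. When $\mu^*=0$, your justification that $w(0)=\lambda q\in\overline{D}$ is invalid, because $\lambda'q$ with $\lambda'<\lambda$ need not lie in $D$. For example, take $C=\RR_{\gs0}^2$ and $P_2=\conv(\fe_1,\fe_2,2\fe_1,2\fe_2)$; all hypotheses hold and $\overline{D}=\{x\in C\mid x_1+x_2\ls1\}$. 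The point $1.8\fe_1=0.9\cdot(2\fe_1)$ lies in $\Lambda_2$ but not in $\overline{D}$. The right set to intersect $\Lambda_2$ with is $\conv(\0,P_2)$, not $\overline{D}$.

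The missing idea is the identity $\conv(\0,P_2)\cap\Lambda_2=P_2$, which follows ray by ray from $C=\RR_{\gs0}P_2$. First, $\conv(\0,P_2)=\{\mu q\mid \mu\in[0,1],\,q\in P_2\}$. Second, $\Lambda_2=\RR_{\gs1}P_2$: if $x=q'+\nu q''$ with $q',q''\in P_2$ and $\nu\gs0$, then $x=(1+\nu)\bigl(\tfrac{1}{1+\nu}q'+\tfrac{\nu}{1+\nu}q''\bigr)$. So suppose $x=\mu q=\nu q'$ with $\mu\ls1\ls\nu$, $q,q'\in P_2$, and $x\neq\0$ (recall $\0\notin\Lambda_2$). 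Then $x/\nu$ and $x/\mu$ are points of $P_2$ on the ray through $x$ with $x$ between them, hence $x\in P_2$ by convexity. Now apply this to $w(\mu^*)$, which lies in $\Lambda_2$. It also lies in $\conv(\0,P_2)$. For $\mu^*=0$ this holds because $w(0)=\lambda q$. For $\mu^*>0$ it holds because $w(\mu)\in D\subseteq\conv(\0,P_2)$ for $\mu<\mu^*$ by (a), and $\conv(\0,P_2)$ is closed. Therefore $w(\mu^*)\in P_2$ and $y=p(\mu^*)+w(\mu^*)\in P_1+P_2$, the desired contradiction. This is exactly what the paper uses when it asserts that, because $\Lambda_2=\RR_{\gs1}P_2$, the segment from a point of $\conv(\0,P_2)$ to a point of $\Lambda_2$ meets $P_2$.
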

	\begin{proof} 
		Let $\fu$ be a point on an extremal ray of $C$, that is, on a one-dimensional face of $C$. By the boundedness assumption, there exists $r\in \RR_{>0}$ with $r\fu\in \Lambda_2$, and so  $r\fu = \fp + \fc$ for some $\fp\in P_2$, $\fc\in C$. Since $\fu$ is extremal, we must have that $\fp$ belong to the same ray as $\fu$. We conclude that $C=\RR_{\gs 0}P_2$, proving the first part of the statement. 
		
		For the second part, we first note that from $C=\RR_{\gs 0}P_2$ we obtain 
		$C=\conv(\0,P_2)\cup \Lambda_2$. Therefore, 
		since  $\Lambda_1+\Lambda_2=P_1+\Lambda_2$, we have $$\Lambda_1=(P_1+\conv(\0,P_2))\cup (P_1+
		\Lambda_2)=(P_1+\conv(\0,P_2))\cup (\Lambda_1+
		\Lambda_2).$$ We now show that $(P_1+\conv(\0,P_2))\cap (P_1+
		\Lambda_2)=P_1+P_2$, which finishes the proof as $\Lambda_1+\Lambda_2=P_1+\Lambda_2$. The containment $\supseteq$ is clear.  Let $\fp_1,\fq_1\in P_1$, $\fp_2\in \conv(\0,P_2)$, $\fq_2\in \Lambda_2$ be such that $\fp_1+\fp_2=\fq_1+\fq_2$. It follows that 
		\begin{equation}\label{eq:p1p2}
			(\lambda\fp_1+(1-\lambda)\fq_1)+(\lambda\fp_2+(1-\lambda)\fq_2)=\fp_1+\fp_2
			\qquad \text{for all } 
			\lambda\in[0,1].
		\end{equation}
		Since $C=\RR_{\gs 0}P_2$, we have $\Lambda_2=\RR_{\gs 1}P_2$.
		Hence the segment joining $\fp_2$ and $\fq_2$ intersects  $P_2$, i.e., there
		exists $\lambda\in [0,1]$ with $\lambda\fp_2+(1-\lambda)\fq_2\in P_2$. As $P_1$
		is convex, $\lambda\fp_1+(1-\lambda)\fq_1\in P_1$, and it follows from \autoref{eq:p1p2} that
		$\fp_1+\fp_2\in P_1+P_2$.	
	\end{proof}

	\begin{proposition}\label{p:passing_to_Ehrhart}
		Under the   hypotheses of \autoref{l:convex_stuff}, assume further that  $P_1$ and $P_2$ have rational vertices.  For any $u,v \in \ZZ_{>0}$ we have $$
		\#\Lat{u\Lambda_1\setminus(u\Lambda_1+v\Lambda_2)}
		=
		E_{P_1,\conv(\0,P_2)}(u,v)-E_{P_1,P_2}(u,v).
		$$
	\end{proposition}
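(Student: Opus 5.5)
The plan is to apply \autoref{l:convex_stuff} to the scaled polyhedra $u\Lambda_1$ and $v\Lambda_2$ and then count lattice points. First, I will check that the hypotheses of \autoref{l:convex_stuff} are preserved under positive scaling. Since $C$ is a cone, $uC=C$ and $vC=C$, so
\[
u\Lambda_1=uP_1+C, \qquad v\Lambda_2=vP_2+C,
\]
where $uP_1$ and $vP_2$ are polytopes. Moreover, $v\Lambda_2\subsetneq vC=C$. The set $C\setminus v\Lambda_2=v(C\setminus\Lambda_2)$ is bounded because $C\setminus\Lambda_2$ is.

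Applying \autoref{l:convex_stuff} to these scaled polyhedra then gives
\[
u\Lambda_1\setminus(u\Lambda_1+v\Lambda_2)=\bigl(uP_1+\conv(\0,vP_2)\bigr)\setminus(uP_1+vP_2).
\]
Since $v>0$, we have $\conv(\0,vP_2)=v\conv(\0,P_2)$. Hence the right-hand side equals
\[
\bigl(uP_1+v\conv(\0,P_2)\bigr)\setminus(uP_1+vP_2).
\]

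Next I need the containment $uP_1+vP_2\subseteq uP_1+v\conv(\0,P_2)$. This holds because $P_2\subseteq\conv(\0,P_2)$. For finite sets $B\subseteq A$ we have $\#(A\setminus B)=\#A-\#B$. Taking lattice points therefore yields
\[
\#\Lat{u\Lambda_1\setminus(u\Lambda_1+v\Lambda_2)}=\#\Lat{uP_1+v\conv(\0,P_2)}-\#\Lat{uP_1+vP_2}.
\]
By definition of $E_{\mathbf{Q}}$ in \autoref{thm:mixed_Ehrhart_structure}, this difference is $E_{P_1,\conv(\0,P_2)}(u,v)-E_{P_1,P_2}(u,v)$.

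The lattice-point counts are finite because both sets are Minkowski sums of polytopes, hence bounded. $\conv(\0,P_2)$ is a rational polytope since $P_2$ has rational vertices, so \autoref{thm:mixed_Ehrhart_structure} applies and the $E$ functions make sense. The only step requiring care is the scaling check, and it is routine, so I do not expect a genuine obstacle.
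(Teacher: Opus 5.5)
Your proof is correct and is the same argument as the paper's. You apply \autoref{l:convex_stuff} to the dilates $u\Lambda_1=uP_1+C$ and $v\Lambda_2=vP_2+C$, use the containment $uP_1+vP_2\subseteq uP_1+v\,\conv(\0,P_2)$, and subtract lattice-point counts; the paper leaves the check that the lemma's hypotheses survive scaling implicit, and you have simply spelled it out.
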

	\begin{proof}
		Since $uP_1+vP_2\subseteq uP_1+\conv(\0,vP_2)=uP_1+v\,\conv(\0,P_2)$  for every $u,v \in \ZZ_{>0}$, 
		the result follows directly from \autoref{l:convex_stuff} and \autoref{thm:mixed_Ehrhart_structure}.
	\end{proof}
	
	From \autoref{p:passing_to_Ehrhart} and \autoref{thm:mixed_Ehrhart_structure} we obtain the following corollary.
	
	\begin{corollary}\label{cor:asymp_dif_Ehr}
		Under the same  assumptions in \autoref{p:passing_to_Ehrhart}, if  $P_1$ and $P_2$ have lattice vertices, then 
			$
			\#\Lat{u\Lambda_1\setminus(u\Lambda_1+v\Lambda_2)}		
			$
			agrees with a polynomial of degree   $\dim(P_1+\conv(\0,P_2))$ for every $u,v\in \ZZ_{>0}$.
		\end{corollary}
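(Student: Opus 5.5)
The plan is to combine \autoref{p:passing_to_Ehrhart} with the lattice-vertex case of \autoref{thm:mixed_Ehrhart_structure}. By \autoref{p:passing_to_Ehrhart}, for every $u,v\in\ZZ_{>0}$,
$$\#\Lat{u\Lambda_1\setminus(u\Lambda_1+v\Lambda_2)}=E_{P_1,\conv(\0,P_2)}(u,v)-E_{P_1,P_2}(u,v).$$
Since $P_1$, $P_2$ have lattice vertices, so does $\conv(\0,P_2)$. Hence, by the last assertion of \autoref{thm:mixed_Ehrhart_structure}, both $E_{P_1,\conv(\0,P_2)}$ and $E_{P_1,P_2}$ agree with genuine polynomials on all of $\NN^2$. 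Their difference is therefore a polynomial $F(u,v)$ that computes the lattice count for every $u,v>0$.

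It remains to determine the degree. Set $D=\dim(P_1+\conv(\0,P_2))$. By \autoref{thm:mixed_Ehrhart_structure}, the first polynomial has degree $D$ and the second has degree $\dim(P_1+P_2)\ls D$, because $P_1+P_2\subseteq P_1+\conv(\0,P_2)$. So $\deg F\ls D$, and the work is to show the degree-$D$ part does not cancel.

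If $\dim(P_1+P_2)<D$, no cancellation can occur and we are done. Suppose instead that $\dim(P_1+P_2)=D$. Let $V$ be the affine hull of $P_1+\conv(\0,P_2)$, with direction space $W$. Then $\operatorname{aff}(P_1+P_2)=V$ as well. Translating, $V=\fp_1+W$ for $\fp_1\in P_1$, and $W$ is spanned by $P_1-P_1$ together with $\conv(\0,P_2)$. This contains $P_2$, so $W$ is the linear span of $P_1-P_1$ and $P_2$.

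The top-degree parts of the two Ehrhart polynomials are the $D$-dimensional relative volumes, normalized to the lattice $W\cap\ZZ^d$, of $uP_1+v\conv(\0,P_2)$ and $uP_1+vP_2$. It therefore suffices to show that $G(u,v)=\Vol_D(uP_1+v\conv(\0,P_2))-\Vol_D(uP_1+vP_2)$ is a nonzero polynomial. Here I use the hypotheses of \autoref{l:convex_stuff}: $\Lambda_2\subsetneq C=\RR_{\gs0}P_2$ and $C\setminus\Lambda_2$ is bounded. These force $\0\notin P_2$, so $P_2$ lies on the far side of a hyperplane separating it from the apex of the pointed cone.

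Setting $u=0$, we get $G(0,v)=v^D(\Vol_D\conv(\0,P_2)-\Vol_D P_2)$ measured in $W$. If $P_2$ is full-dimensional in $W$, then $\conv(\0,P_2)$ strictly contains $P_2$ with nonempty interior difference, so $G\neq0$. In general I would argue directly: for large $u,v$, \autoref{l:convex_stuff} shows the region $(uP_1+v\conv(\0,P_2))\setminus(uP_1+vP_2)$ contains a translate of $uP_1+\epsilon v\conv(\0,P_2)$. This holds because points of $v\conv(\0,P_2)$ near the apex have no representative in $vP_2$, since $P_2$ is separated from $\0$ by a positive linear functional $\ell$. That translate is $D$-dimensional, so its volume is a nonzero homogeneous polynomial of degree $D$. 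This gives $\deg F=D$.

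The main obstacle is this nondegeneracy (non-cancellation) step. A cleaner route I would try first is the functional $\ell$ with $\ell>0$ on $P_2$. The slice $\{\ell<\min_{P_1}\ell+\delta v\}$ of $uP_1+v\conv(\0,P_2)$ misses $uP_1+vP_2$ entirely once $\delta<\min_{P_2}\ell$, and this slice has positive $D$-volume of order $v\cdot(\text{degree }D-1)$.
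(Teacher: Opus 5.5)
Your first two paragraphs are exactly the paper's proof. The paper derives the corollary in one line from Proposition~\ref{p:passing_to_Ehrhart} and the lattice-vertex case of Theorem/Definition~\ref{thm:mixed_Ehrhart_structure}. It never checks whether the degree-$D$ parts of the two Ehrhart polynomials cancel. Its later uses, in Propositions~\ref{p:Latt_inside} and~\ref{p:EG_pol}, only need degree at most $D$. So your non-cancellation step goes beyond the paper, and it needs one correction.

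\textbf{The flawed sub-argument.} You claim that $(uP_1+v\conv(\0,P_2))\setminus(uP_1+vP_2)$ contains a translate of $uP_1+\epsilon v\conv(\0,P_2)$. This fails when $u$ is large relative to $v$. Take $P_1=\conv(\0,\fe_1)$ and $P_2=\conv(\fe_1,\fe_2)$ in $\RR^2$; this is exactly your equal-dimension case. For every $u$, the difference set is the triangle $v\conv(\0,P_2)$ minus its hypotenuse, whose horizontal cross-sections have length at most $v$. But $uP_1+\epsilon v\conv(\0,P_2)$ has a horizontal cross-section of length $u+\epsilon v$, so no translate fits once $u+\epsilon v>v$.

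\textbf{The working sub-argument.} Your ``cleaner route'' does work, once the threshold is written as $u\min_{P_1}\ell+\delta v$ rather than $\min_{P_1}\ell+\delta v$.
\begin{itemize}
\item Every point of $uP_1+vP_2$ satisfies $\ell\gs u\min_{P_1}\ell+v\min_{P_2}\ell$.
\item Let $\fp^{*}\in P_1$ minimize $\ell$. The open half-space $\{\ell<u\min_{P_1}\ell+\delta v\}$ contains $u\fp^{*}$, so it meets the $D$-dimensional convex body $uP_1+v\conv(\0,P_2)$.
\item Hence it meets that body in a set of positive relative volume that is disjoint from $uP_1+vP_2$.
\end{itemize}
This gives $G(u,v)>0$ for every $u,v>0$, which is all you need. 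The separate $u=0$ case and the claimed order of growth of the slice volume are unnecessary.

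Finally, identifying the top-degree parts with relative volumes normalized by $W\cap\ZZ^d$ is standard (McMullen). However, Theorem/Definition~\ref{thm:mixed_Ehrhart_structure} as stated only covers degree $d$, so cite it in the relative form.
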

		
		The next definition provides a way to capture the asymptotic growth of 
		functions in two variables.
		\begin{definition}\label{d:o(ell)}
			Let $N$ be a non-negative integer and let $f:\ZZ_{>0}^2\to \RR$ be a function.
			We say that $f= o(N)$ if 
			\[
			\lim_{t\to\infty}
			\frac{f(tu+a,tv+b)}{t^{N}}=0\,\,\text{ for every }\,\,u,v\in \ZZ_{>0},\,\,a,b,\in \ZZ.
			\]
			We write $f(u,v)=f'(u,v)+o(N)$ if   $f-f'\in o(N)$.  
		\end{definition}
		
		The following simple remark about \autoref{d:o(ell)} is used in several of the proofs below. 
		


		\begin{remark}\label{rem:complexitydown}
			If $f=p+o(N)$ for some polynomial $p\in \RR[u,v]$ of degree at most $N$, then   $f(u+c,v)=f(u,v)+o(N)$ for any $c\in\ZZ$.
		\end{remark}
		


		The set of lattice points $\Lat{u\Lambda_1\setminus(u\Lambda_1+v\Lambda_2)}$  is contained in $\Lat{u\Lambda_1}\setminus(\Latt{u\Lambda_1}+v\Lambda_2)$, and the inclusion is generally strict, since a lattice point of a Minkowski sum need not be a sum of lattice points. The next proposition shows that the discrepancy is negligible for our purposes.  
		We denote by 
		$\fe_1,\ldots, \fe_d$ 
		the standard basis of
		$\RR^d$.

		\begin{proposition}\label{p:Latt_inside}
			Under the same  assumptions in \autoref{p:passing_to_Ehrhart}, further assume that $P_1$ has lattice vertices and that $P_2 = \conv(\fe_{i_1}, \ldots, \fe_{i_r})$ for some $1\ls i_1,\ldots, i_r\ls d$. We have
			\[
				\#\Lat{u\Lambda_1\setminus(u\Lambda_1+v\Lambda_2)}
				=
				\#\bigl(\Lat{u\Lambda_1}\setminus(\Latt{u\Lambda_1}+v\Lambda_2)\bigr)+o\bigl(\dim(P_1+\conv(\0,P_2))\bigr).
			\]
		\end{proposition}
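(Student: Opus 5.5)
Write $N=\dim\bigl(P_1+\conv(\0,P_2)\bigr)$ and $F(u,v)=\#\Lat{u\Lambda_1\setminus(u\Lambda_1+v\Lambda_2)}$. The approach is to sandwich the right-hand count between $F(u,v)$ and $F(u,v+r)$, and then use \autoref{cor:asymp_dif_Ehr}: $F$ is a polynomial of degree $N$, so the difference of the two sandwiching values has degree at most $N-1$ and is therefore $o(N)$.

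\emph{Step 1: describe $\Lambda_2$ concretely.} By \autoref{l:convex_stuff} we have $C=\RR_{\gs 0}P_2$, which is the coordinate cone $\RR_{\gs0}\fe_{i_1}+\cdots+\RR_{\gs0}\fe_{i_r}$. Let $\sigma(\fx)$ denote the sum of the coordinates of $\fx$ indexed by $i_1,\ldots,i_r$. The proof of \autoref{l:convex_stuff} gives $\Lambda_2=\RR_{\gs1}P_2$, hence
\[
v\Lambda_2=\{\fx\in C\mid \sigma(\fx)\gs v\}.
\]

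\emph{Step 2: the easy inclusion.} Every lattice point of $\Latt{u\Lambda_1}+v\Lambda_2$ lies in $u\Lambda_1+v\Lambda_2$. Hence
\[
\Lat{u\Lambda_1\setminus(u\Lambda_1+v\Lambda_2)}\subseteq \Lat{u\Lambda_1}\setminus\bigl(\Latt{u\Lambda_1}+v\Lambda_2\bigr).
\]
The difference set $D$ consists of the lattice points of $u\Lambda_1+v\Lambda_2$ that are not in $\Latt{u\Lambda_1}+v\Lambda_2$.

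\emph{Step 3: rounding (the key step).} Let $\fx\in\ZZ^d$ with $\fx=\ba+\bb$, where $\ba\in u\Lambda_1$ and $\bb\in(v+r)\Lambda_2$. Let $\bb'=\lfloor\bb\rfloor$ be the coordinatewise floor. Then $\bb'$ is a lattice point of $C$. Since $\bb$ is supported on the $r$ coordinates $i_1,\ldots,i_r$, the defect $\bb-\bb'$ lies in $[0,1)^r\subseteq C$, so
\[
\sigma(\bb')>\sigma(\bb)-r\gs v,
\]
and therefore $\bb'\in v\Lambda_2$. Moreover $\ba'=\fx-\bb'=\ba+(\bb-\bb')\in u\Lambda_1+C=u\Lambda_1$, and $\ba'$ is a lattice point because $\fx$ and $\bb'$ are. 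Thus $\fx\in\Latt{u\Lambda_1}+v\Lambda_2$. This shows
\[
D\subseteq \Lat{(u\Lambda_1+v\Lambda_2)\setminus(u\Lambda_1+(v+r)\Lambda_2)}.
\]

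\emph{Step 4: conclude.} We have $u\Lambda_1+(v+r)\Lambda_2\subseteq u\Lambda_1+v\Lambda_2\subseteq u\Lambda_1$. Therefore the counts subtract:
\[
0\ls\#D\ls F(u,v+r)-F(u,v).
\]
By \autoref{cor:asymp_dif_Ehr}, $F$ agrees with a polynomial of degree $N$ on $\ZZ_{>0}^2$. A shift in $v$ by the constant $r$ kills the top-degree part, which is the $v$-analogue of \autoref{rem:complexitydown}. Hence $F(u,v+r)-F(u,v)$ is a polynomial of degree at most $N-1$, which is $o(N)$ in the sense of \autoref{d:o(ell)}. It follows that $\#D=o(N)$, which is the claim.

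The main obstacle is Step 3, namely choosing the rounding so that the lattice part stays inside $u\Lambda_1$. This works precisely because $C$ is a coordinate cone, so $\bb-\lfloor\bb\rfloor\in C$, and because $\Lambda_1$ is stable under adding $C$.
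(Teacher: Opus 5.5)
Your proposal is correct and follows essentially the same route as the paper. Both prove the key inclusion $\Lat{u\Lambda_1+(v+r)\Lambda_2}\subseteq \Latt{u\Lambda_1}+v\Lambda_2$ by coordinatewise rounding, then conclude from the polynomiality in \autoref{cor:asymp_dif_Ehr} via a shift in $v$; the paper's bound $h(u,v)\ls g(u,v+r)$ is equivalent to your $\#D\ls F(u,v+r)-F(u,v)$. The only cosmetic difference is that the paper rounds $\ba$ up while you round $\bb$ down, and since $\ba+\bb\in\ZZ^d$ these give the same decomposition, because $\fx-\lfloor\bb\rfloor=\lceil\ba\rceil$.
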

		\begin{proof}
			Without loss of generality we assume $i_j=j$ for every $j$. By \autoref{l:convex_stuff}, one has $C=\RR_{\gs0}P_2=\RR_{\gs 0}\conv(\fe_{1}, \ldots, \fe_{r})$, and so $\Lambda_2 = \RR_{\gs 1}\conv(\fe_{1}, \ldots, \fe_{r})$.  
			
			To simplify notation, set
			\[
			g(u,v)=\#\Lat{u\Lambda_1\setminus(u\Lambda_1+v\Lambda_2)}
			\qquad\text{and}\qquad
			h(u,v)=\#\bigl(\Lat{u\Lambda_1}\setminus(\Lat{u\Lambda_1}+v\Lambda_2)\bigr).
			\]
			Since  $g(u,v)\ls h(u,v)$, by 
			\autoref{cor:asymp_dif_Ehr} and \autoref{rem:complexitydown}, it suffices to show 
			$h(u,v)\ls g(u,v+c)$ for some $c\in \NN$ and every $u,v\in \ZZ_{>0}$. We show below that  
			$
			\Lat{u\Lambda_1+(v+r)\Lambda_2}
			\subseteq 
			\Lat{u\Lambda_1}+v\Lambda_2
			$ for every $u,v\in \ZZ_{>0}$, which finishes the proof.

			Let $\ba=(a_1,\ldots, a_d)\in u \Lambda_1$ and $\bb\in (v+r)\Lambda_2$ be such that $\ba+\bb\in \ZZ^d$.   Let $\lceil\ba\rceil = (\lceil a_1\rceil, \ldots, \lceil a_d\rceil)$, where $\lceil a_i\rceil$ is the ceiling of $a_i$. Observe that $a_i\in \ZZ$ for $i> r$, so that $\lceil\ba\rceil -\ba\in C$ and $\lceil\ba\rceil\in u\Lambda_1$. Let $\bb'\in \NN^d$ be such that $\ba+\bb=\lceil\ba\rceil +\bb'$. Thus,  $|\bb'|=|\bb|-|\lceil\ba\rceil-\ba|\gs (v+r)-r=v$. We conclude  $\bb'\in v\Lambda_2$. It follows that $\ba+\bb=\lceil\ba\rceil +\bb'\in  \Latt{u\Lambda_1}+v\Lambda_2$, as desired. 
		\end{proof}

		
		\section{Formula for the multiplicity sequence}\label{s:mult-seq}

		Let $\kk$ be a field and $d\in \ZZ_{>0}$. 
		Let $I=(\fx^{v_1},\ldots, \fx^{v_r})\subsetneq \kk[x_1,\ldots,x_d]_{(x_1,\ldots, x_d)}$ be a nonzero  monomial ideal generated by the monomials $\fx^{v_i}=x_1^{v_{i,1}}\cdots x_d^{v_{i,d}}$. 
		Let $$\Gamma=\conv(\fv \mid \fx^\fv\in I)\subseteq \RR^d$$ be the \emph{Newton polyhedron of $I$}. 
		We 
		say a face of $\Gamma$ is \emph{non-coordinate} 
		if it is not contained in any of the coordinate planes. 
		For each $n\in \NN$ we denote by $\F_{n}(\Gamma)$ the set of non-coordinate faces of $\Gamma$ of dimension $n$.  We note that $\F_{d-1}(\Gamma)$ is the set of  \emph{non-coordinate facets} of $\Gamma$. We denote by  
		$\F(\Gamma)=
		\bigcup_{n < d}\F_n(\Gamma)$  the set of all non-coordinate proper faces of $\Gamma$.

		Fix $F\in \F(\Gamma)$ and let $\bb_1,\ldots, \bb_g$ be the vertices of $F$. The face  $F$  can be written as
		\begin{equation}\label{eq:face_decomp}
			F=\conv(\bb_1,\ldots, \bb_g)+\RR_{\gs 0}\fe_{u_1}+\cdots +\RR_{\gs 0}\fe_{u_k}.
		\end{equation}  
		Note that $k=0$ if and only if $F$ is bounded. 
		We call the vectors $\fe_{u_i}$ the \emph{unbounded directions of $F$}, and the other  $\fe_i$ the \emph{bounded directions of $F$}. 
		
		We  define 
		$$
		\Delta_F=\conv(\fe_{u_1},\ldots, \fe_{u_k}),
		\qquad 
		\Delta^{\leqslant}_F = \conv(\0, \Delta_F),
		\qquad  \text{and}   \qquad 
		\Delta_F^{\geqslant}=\RR_{\gs 1}\Delta_F.
		$$
		If $F$ is bounded, we set by convention 
		\begin{equation}\label{eq:conv_bounded}
			\Delta_F^{\ls}=\{\0\} 
			\qquad
			\text{and}
			\qquad 
			\Delta_F=\Delta_F^{\geqslant}=\emptyset. 
		\end{equation}

		It is also convenient to define the \emph{polytope of $F$} to be the convex hull of its vertices 
		$$\pol(F)= \conv(\bb_1,\ldots, \bb_g ),$$ 
		and two natural pyramids: 
		the \emph{pyramid} over $\pol(F)$, 
		and the \emph{extended pyramid} over the entire $F$
		$$
		\bdpyr(F)=\conv(\0,\pol(F))
		\quad \text{and} \quad 
		\epy(F)= \overline{\conv(\0,F)}=
		\bdpyr(F)+\RR_{\gs 0}\Delta_F.
		$$
		For each $u\in \ZZ_{>0}$ we set 
		\begin{equation}\label{eq:coneu}
		\cone_{u}(F)  = 
		\bigcup_{u\ls s< u+1}s\,F
		=
		\left((u+1)\epy(F) \setminus (u+1)F\right) \setminus \left(u\epy(F) \setminus uF\right).
		\end{equation}
		In other words, $\cone_u(F)$ consists of the portion of the cone over  $\epy(F)$  
		lying between the $u$-th and $(u+1)$-st dilates of $F$,  and not including $(u+1)F$. 

		We illustrate the notation introduced so far with a simple example.
		\begin{example}\label{e:notation_cones}
			Let $I=(x_1^2x_3,x_2^2x_3,x_1x_2)\subset \kk[x_1,x_2,x_3]_{(x_1,x_2,x_3)}$. Set $\fv_1=(2,0,1)$, $\fv_2=(0,2,1)$, and $\fv_3=(1,1,0)$.  The Newton polyhedron of $I$ is 
			$$\Gamma=\conv(\fv_1,\fv_2,\fv_3)+\RR_{\gs 0}^3.$$
			In \autoref{fig:NP} we plot $\Gamma$ and its non-coordinate facets. 
			Writing these facets 
			as in \autoref{eq:face_decomp} we obtain
			\[
			F_1 = \conv(\fv_1, \fv_3) + \RR_{\gs 0}\fe_1, \qquad 
			F_2 = \conv(\fv_1, \fv_2, \fv_3) + \RR_{\gs 0}\fe_3, \qquad
			F_3 = \conv(\fv_2, \fv_3) + \RR_{\gs 0}\fe_2.
			\]
			The only unbounded direction of $F_1$ is $\fe_1$, therefore
			$$\Delta_{F_1} = \{\fe_1\},\qquad \Delta_{F_1}^{\leqslant}=\conv(\0,\fe_1),\qquad \text{and}  \qquad \Delta_{F_1}^{\gs }=\RR_{\gs 1}\fe_1.$$ 
			The polytope  $\pol(F_1)$ is the segment joining  $\fv_1$ and  $\fv_3$. The pyramid $\bdpyr(F_1)$ is  the triangle with vertices $\0, \fv_1,\fv_3$, and the extended pyramid $\epy(F_1)$ is 
			$\bdpyr(F_1)+\RR_{\gs 0}\fe_1$.  
			For $u\in \RR_{\gs 0}$, we have $\cone_{u}(F_1)=Q_u+\RR_{\gs 0}\fe_1$, where $Q_u$ is the quadrilateral with vertices $u\fv_1$, $u\fv_3$, $(u+1)\fv_1$, $(u+1)\fv_3$  with the segment joining $(u+1)\fv_1$ and $(u+1)\fv_3$  removed. 
			\begin{figure}[H]
				\centering
				\includegraphics*[height=2.5in]{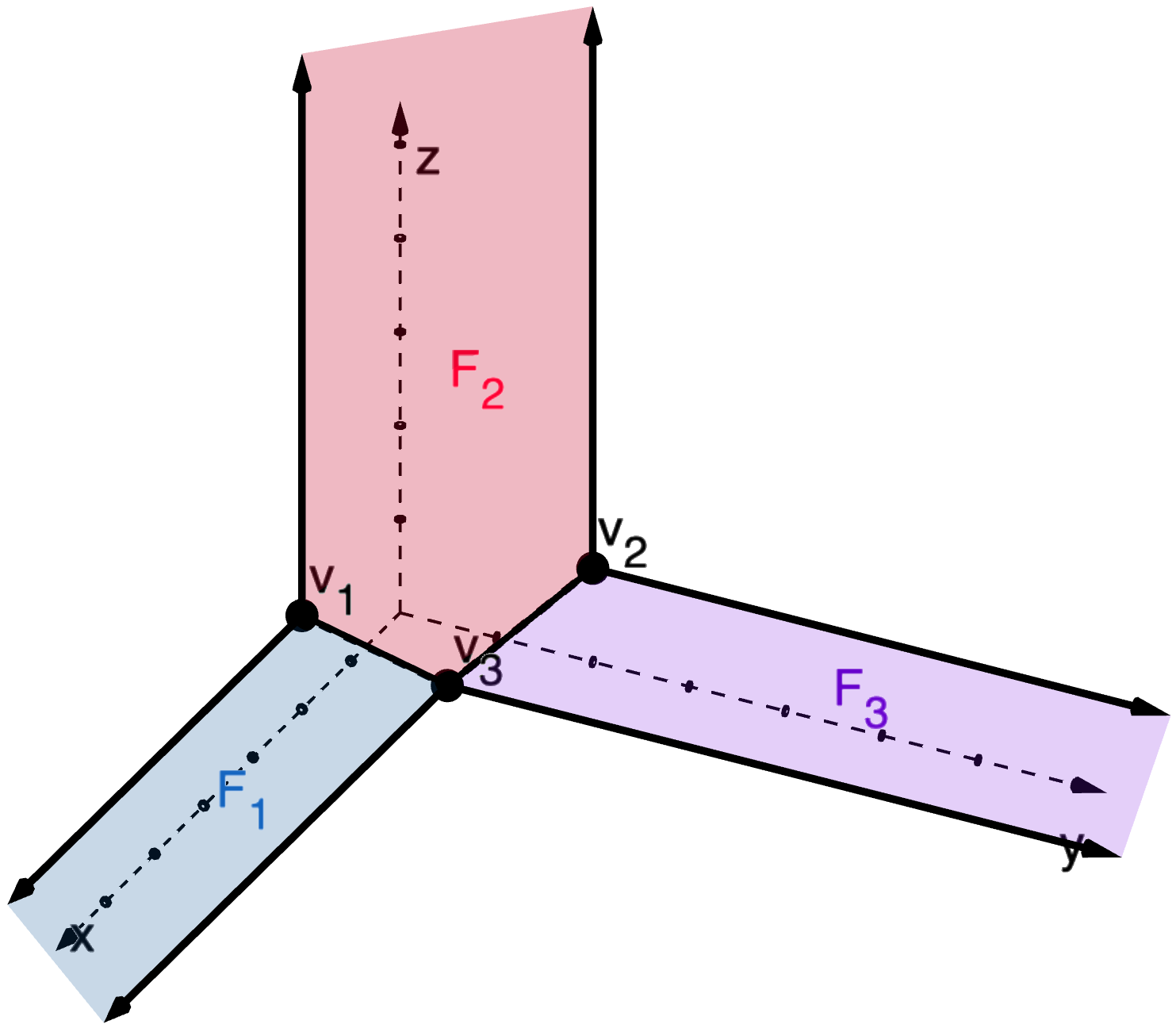}
				\caption{The Newton polyhedron of $I=(x_1^2x_3,x_2^2x_3,x_1x_2)$ and its  non-coordinate  facets.}
				\label{fig:NP}
			\end{figure}
			
		\end{example}

		 For a submonoid $S\subseteq \NN^d$ we denote by $\kk[S]$ its \emph{monoid algebra}, that is, the subalgebra of $\kk[x_1,\ldots, x_d]$ generated by the of monomials $\{\fx^{\fv}\mid \fv \in S\}$.
		
		\begin{lemma}\label{l:polyuvgg0}
				Let $F\in \F(\Gamma)$ be an unbounded face. The function 
			\[		\#\left[\Lat{u\epy(F)}
			\setminus 
			\left(
			\Lat{u\epy(F)} + 
			v\Delta_F^{\gs}
			\right)\right]
			\] 
			agrees with a polynomial for $u,v\gg 0$.
		\end{lemma}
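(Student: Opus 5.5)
The plan is to identify the count with a bigraded Hilbert function of a finitely generated algebra, which gives a quasi-polynomial for $u,v\gg 0$. Then I would use the convex-geometric results of \autoref{s:conv_Geom} to show that it is in fact a polynomial.

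\textbf{Setup.} Put $C=\RR_{\gs 0}\Delta_F=\RR_{\gs 0}\fe_{u_1}+\cdots+\RR_{\gs 0}\fe_{u_k}$. Then $\epy(F)=\bdpyr(F)+C$ and $\Delta_F^{\gs}=\Delta_F+C$. The cone $C$ is polyhedral and pointed, $\Delta_F^{\gs}\subsetneq C$, and $C\setminus\Delta_F^{\gs}$ is bounded. So \autoref{l:convex_stuff}, \autoref{p:passing_to_Ehrhart}, \autoref{cor:asymp_dif_Ehr} and \autoref{p:Latt_inside} all apply with $\Lambda_1=\epy(F)$, $P_1=\bdpyr(F)$ (which has lattice vertices $\0,\bb_1,\ldots,\bb_g$) and $\Lambda_2=\Delta_F^{\gs}$, $P_2=\Delta_F$.

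Since $\Lat{u\epy(F)}$ consists of lattice points, a lattice point of $\Lat{u\epy(F)}+v\Delta_F^{\gs}$ has the form $\ba+\bb$ with $\ba$ a lattice point of $u\epy(F)$ and $\bb\in\NN^d$. Such a $\bb$ is supported on $\fe_{u_1},\ldots,\fe_{u_k}$ and has $|\bb|\gs v$.

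\textbf{Algebraic reformulation.} Consider the monoid $M=\{(\ba,t)\in\ZZ^{d}\times\NN\mid \ba\in t\,\epy(F)\}$, which is finitely generated by Gordan's lemma. Let $A=\kk[M]$, graded by $t$. Then $A_0=\kk[x_{u_1},\ldots,x_{u_k}]$, because $0\cdot\epy(F)$ is the recession cone $C$. Let $\mathfrak n=(x_{u_1},\ldots,x_{u_k})A$. By the reformulation above,
\[
h(u,v):=\#\bigl[\Lat{u\epy(F)}\setminus(\Lat{u\epy(F)}+v\Delta_F^{\gs})\bigr]=\dim_\kk A_u/\mathfrak n^vA_u .
\]
This is finite because $\mathfrak n^vA_u$ contains every monomial of $A_u$ lying in $u\epy(F)+vC'$, where $C'$ is a translate of $C$, and the complement is bounded.

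Thus $h$ is the sum transform in $v$ of the Hilbert function of the finitely generated bigraded algebra $\gr_{\mathfrak n}(A)$. By the standard theory, via a Veronese subring to reduce to generators in degrees $(1,0)$ and $(0,1)$, it agrees with a quasi-polynomial $Q(u,v)$ for $u,v\gg0$. Its periods occur only in the $u$-direction, because the $v$-grading is generated in degree one by the $x_{u_i}$.

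\textbf{Removing the periodicity.} This is the main obstacle: the generators of $A$ in positive $t$-degree need not lie in degree $1$, since pyramids over lattice polytopes need not be normal. I would attack it in two steps.

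First, set $g(u,v)=\#\Lat{u\epy(F)\setminus(u\epy(F)+v\Delta_F^{\gs})}$. By \autoref{cor:asymp_dif_Ehr}, $g$ is a genuine polynomial $P$ in $u,v$ for all $u,v>0$. The proof of \autoref{p:Latt_inside} gives $g(u,v)\ls h(u,v)\ls g(u,v+k)$. Therefore every constituent of $Q$ is squeezed between $P(u,v)$ and $P(u,v+k)$, so all constituents share the top-degree part of $P$.

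Second, to get exact equality of constituents, I would fiber the count over the projection $\pi$ forgetting the unbounded coordinates. The image $\pi(u\epy(F))=u\,\pi(\bdpyr(F))$ is a lattice polytope. Over each lattice point $\fp$, the fiber of $\Lat{u\epy(F)}$ is an upward-closed set $\fp+\{$lattice points of $Q_{\fp}+\RR^k_{\gs0}\}$, where $Q_{\fp}$ is a fiber polytope. The fiber contribution to $h$ is the number of points of this set whose distance to its minimal elements, measured by $|\cdot|$, is less than $v$.

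I would try to express this fiberwise count, summed over $\fp$, as an alternating sum of counts $E_{\mathbf Q}(u,v)$ for lattice polytopes built from $\bdpyr(F)$, its faces and $\Delta_F^{\ls}$. Then \autoref{thm:mixed_Ehrhart_structure} gives constant coefficients, hence a polynomial. If this bookkeeping becomes unwieldy, the fallback is to show directly that the finitely many constituents of $Q$ coincide, by comparing $h(u,v)$ with $h(u+c,v)$ through a lattice translation inside $\epy(F)$.
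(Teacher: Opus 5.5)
\textbf{Verdict: genuine gap.} Your reformulation $h(u,v)=\dim_\kk A_u/\mathfrak n^vA_u$ is correct and is the paper's own identification; your $\kk[M]$ is the paper's normal monoid algebra $B=\kk[\tilde S]$. The gap is at the step you yourself call the \emph{main obstacle}: you only obtain a quasi-polynomial, and you never remove the periodicity.

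The sandwich $g(u,v)\ls h(u,v)\ls g(u,v+k)$ is valid, but it only fixes the homogeneous part of degree $\dim F+1$. The constituents could still differ in degree $\dim F$, and that is exactly what matters later. In \autoref{p:EG_pol} the lemma is used to conclude that $h-g$, being a polynomial that is $o(\dim F+1)$, has degree at most $\dim F$. Its first difference in $u$ is then $o(\dim F)$. A degree-$\dim F$ coefficient that oscillates with $u$ would break this. The fibrewise bookkeeping and the \emph{lattice translation} fallback are only announced, not carried out. Moreover, the fibre polytopes $Q_{\fp}$ are in general only rational, so it is unclear that the first one can be made to work.

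The missing idea is that there is no periodicity to remove, and the non-normality of $\bdpyr(F)$ is irrelevant. The argument runs as follows.
\begin{enumerate}
\item Let $A'\subseteq\kk[M]$ be the subalgebra generated by $x_{u_1},\ldots,x_{u_k}$ in $t$-degree $0$ and by $t,\fx^{\bb_1}t,\ldots,\fx^{\bb_g}t$ in $t$-degree $1$. This uses that $\bdpyr(F)$ has lattice vertices.
\item Its monoid $S$ satisfies $\RR_{\gs0}S\cap\ZZ^{d+1}=M$, so $\kk[M]$ is a finitely generated $A'$-module.
\item Setting $\frp=(x_{u_1},\ldots,x_{u_k})A'$, you have $\mathfrak n=\frp\,\kk[M]$.
\item Hence $\gr_{\mathfrak n}(\kk[M])$ is a finitely generated bigraded module over $\gr_\frp(A')$, which is a standard bigraded algebra over $A'_0/\frp A'_0=\kk$.
\item The Hilbert function of such a module is a genuine polynomial for $u,v\gg0$, regardless of the degrees of the module generators. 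The same then holds for its sum transform in $v$.
\end{enumerate}
This is the paper's argument. It rests on the same reason Ehrhart functions of non-normal lattice polytopes are still polynomials, and it replaces your entire final section.
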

		\begin{proof} 
		Assume without loss of generality that the unbounded directions of $F$ are $\fe_1, \ldots, \fe_k$ and 
		  let $\bb_1,\ldots, \bb_g$ be the vertices of $\pol(F)$.  
		  
		  Let $S$ be the submonoid of $ \NN^{d+1}$ generated by the vectors 
		  \[
		  \{(\fe_1,0), \ldots, (\fe_k,0),(\0,1),(\bb_1,1),\ldots, (\bb_g,1)\}.
		  \] 
		  Consider the monoid  $\tilde{S}=\RR_{\gs 0}S\cap \ZZ^{d+1}$, and  let $A:=\kk[S]\subseteq B:=\kk[\tilde{S}]\subseteq \kk[x_1,\ldots, x_d,t]$ be the corresponding monoid algebras, where the rightmost coordinates correspond to  the variable $t$. We use  $t$ to give $\ZZ$-gradings to $A$ and $B$, that is,  $\deg(\fx^\fv t^u) =u$, and write $A=\oplus_{u\in \NN}A_u$,  $B=\oplus_{u\in \NN}B_u$.
		 
		  Let $\mathfrak{p}\subset A$ be the ideal generated by $(x_1,\ldots, x_k)$. Let   $\gr_\frp(A)=\oplus_{v\in \NN}\frp^v/\frp^{v+1}$ and  $\gr_\frp(B)=\oplus_{v\in \NN}\frp^vB/\frp^{v+1}B$ be the associated graded algebras of $A$ and $B$ with respect to $\frp$. 
		  Observe that $\gr_\frp(A)=\oplus_{u,v\in \NN}\frp^vA_u/\frp^{v+1}A_u$ has the structure of a standard bigraded algebra  over $A_0/\frp A_0=\kk$, and $\gr_\frp(B)=\oplus_{u,v\in \NN}\frp^vB_u/\frp^{v+1}B_u$ of  a bigraded $\gr_\frp(A)$-module. 

		   Since $B$ is a finitely generated $A$-module \cite[Corollary 2.10 and Proposition 2.7(b)]{BrunsGubeladze09}, 
		 $\gr_\frp(B)$ is a finitely generated $\gr_\frp(A)$-module. Therefore, the first sum transform of its  Hilbert function 
		  \[
			\dim_\kk\left(B_u/\frp^{v}B_u\right) = \sum_{i=0}^{v-1}\dim_\kk\left(\frp^{i}B_u/\frp^{i+1}B_u\right) 
		  \]
		  agrees with a polynomial for $u,v\gg 0$. 
		 
		 Finally, the result follows by observing that 
		 \[
		 \tilde{S}=\{(\bb,0)\mid \bb\in \Delta_F^{\gs }\}
		 \cup \{(\bb,u)\mid u\in \ZZ_{>0}, \bb\in u\epy(F)\},
		 \] 
		 so that 
		 \[
		 \dim_\kk\left(B_u/\frp^{v}B_u\right) = 	\#\left[\Lat{u\epy(F)}
		 \setminus 
		 \left(
		 \Lat{u\epy(F)} + 
		 v\Delta_F^{\gs}
		 \right)\right]
		 \]
		 for every $u,v\in \ZZ_{>0}$. 
		\end{proof}

		For $F\in \F(\Gamma)$ and $u,v\in \ZZ_{>0}$ we  define the function
		\begin{equation}\label{eq:Lfunction}
			\L_F(u,v)=
			\#\left[
			\Lat{\cone_u(F)}
			\setminus 
			\left(
			\Lat{\cone_u(F)} + v\Delta_F^{\gs }
			\right)
			\right].
		\end{equation}
		
		\begin{proposition}\label{p:EG_pol}
			Let $F\in \F(\Gamma)$. Then
			\begin{align*}
				\L_F(u,v)&=
				\bigl(
				E_{\bdpyr(F), \Delta_F^{\ls}}\bigl(u+1,v\bigr) &&\hspace{-10pt}-
				E_{\bdpyr(F), \Delta_F^{\ls}}\bigl(u,v\bigr)
				\bigr)\\ 
				&&&-\bigl(
				E_{\bdpyr(F), \Delta_F}\bigl(u+1,v\bigr) - 
				E_{\bdpyr(F), \Delta_F}\bigl(u,v\bigr)
				\bigr)
				+ o(\dim(F))\\
				&=o(\dim(F)+1)&&
			\end{align*}
		\end{proposition}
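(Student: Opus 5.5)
The plan is to reduce $\L_F$ to the counts already controlled in \S\ref{s:conv_Geom}. We apply \autoref{l:convex_stuff}, \autoref{p:passing_to_Ehrhart} and \autoref{p:Latt_inside} with
$$C=\RR_{\gs 0}\Delta_F,\qquad P_1=\bdpyr(F),\qquad P_2=\Delta_F,$$
so that $\Lambda_1=\epy(F)$ and $\Lambda_2=\Delta_F^{\gs}$. We work inside $\RR^d$, with $C$ spanned by the unbounded directions $\fe_{u_1},\ldots,\fe_{u_k}$. Then $C$ is pointed, $\Lambda_2=\RR_{\gs 1}\Delta_F\subsetneq C$, and $C\setminus\Lambda_2=\conv(\0,\Delta_F)\setminus\Delta_F$ is bounded. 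Moreover $P_1$ has lattice vertices and $P_2$ is a simplex of standard basis vectors, so all hypotheses hold.

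The bounded case ($k=0$) is treated separately using the conventions \autoref{eq:conv_bounded}. There $\L_F(u,v)=\#\Lat{\cone_u(F)}$, and the right-hand side reduces to $E_{\bdpyr(F)}(u+1)-E_{\bdpyr(F)}(u)$, since the $\Delta_F=\emptyset$ terms vanish and $\{\0\}$ contributes nothing. Set
$$g(u,v)=\#\Lat{u\epy(F)\setminus(u\epy(F)+v\Delta_F^{\gs})},\qquad h(u,v)=\#\bigl(\Lat{u\epy(F)}\setminus(\Lat{u\epy(F)}+v\Delta_F^{\gs})\bigr).$$
\autoref{p:passing_to_Ehrhart} gives $g(u,v)=E_{\bdpyr(F),\Delta_F^{\ls}}(u,v)-E_{\bdpyr(F),\Delta_F}(u,v)$.

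The key structural step is a level decomposition. Let $w$ be a linear functional with $F\subseteq\{\langle w,\fx\rangle=1\}$ that is a supporting functional of $\Gamma$. Since the $\fe_{u_i}$ are recession directions of $F$, we have $w_{u_i}=0$, so adding elements of $\RR_{\gs0}\Delta_F$ preserves the level $\langle w,\cdot\rangle$. Since $\epy(F)=\bigcup_{0\ls s\ls 1}sF$, the slices satisfy $u\epy(F)\cap\{\langle w,\cdot\rangle=s\}=sF$ for $s\ls u$, and $\cone_u(F)$ is the union of the slices with $u\ls s<u+1$. Because translation by $v\Delta_F^{\gs}$ preserves levels, both $h$ and $\L_F$ split as sums over levels of slice counts
$$\sigma_s(v)=\#\bigl(\Lat{sF}\setminus(\Lat{sF}+v\Delta_F^{\gs})\bigr).$$
Hence
$$\L_F(u,v)=h(u+1,v)-h(u,v)-\sigma_{u+1}(v)+\sigma_u(v).$$
The same splitting holds for $g$, now with $\Lat{sF\setminus(sF+v\Delta_F^{\gs})}$ as slice counts. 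Each slice lies in the dilate $sF$, which has dimension $\dim F$. So $\sigma_{u+1}(v)-\sigma_u(v)$ should be a difference in $u$ of quantities that are polynomial up to $o(\dim F)$; this follows from an Ehrhart argument on $F$ alone, analogous to \autoref{p:passing_to_Ehrhart} with $P_1=\pol(F)$. Then \autoref{rem:complexitydown} shows that this difference is $o(\dim F)$.

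The main obstacle is replacing $h(u+1,v)-h(u,v)$ by $g(u+1,v)-g(u,v)$ with error $o(\dim F)$ rather than the $o(\dim F+1)$ that \autoref{p:Latt_inside} gives directly. For this I would rerun the proof of \autoref{p:Latt_inside} slice by slice. The ceiling argument there preserves levels, because $\lceil\ba\rceil-\ba\in C$. So for each level $s$ the slice counts of $g$ and $h$ differ by at most the lattice points of a band in $sF$ of $v$-width $r$, namely the region between translates by $v\Delta_F^{\gs}$ and $(v+r)\Delta_F^{\gs}$. Summing over the $O(1)$ lattice levels with $u\ls s<u+1$ (the levels are discrete because $w$ is rational), each band has size $O(t^{\dim F-1})$ along rays $(tu+a,tv+b)$. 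This yields a discrepancy that is $o(\dim F)$.

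Combining these steps gives the first displayed equality. Finally, $E_{\bdpyr(F),\Delta_F^{\ls}}$ and $E_{\bdpyr(F),\Delta_F}$ are polynomials of degree at most $\dim(\bdpyr(F)+\Delta_F^{\ls})=\dim F+1$ by \autoref{thm:mixed_Ehrhart_structure}. Their forward differences in $u$ therefore have degree at most $\dim F$, which gives the final $o(\dim F+1)$ estimate.
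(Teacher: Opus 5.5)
Your argument is correct, and its skeleton matches the paper's. Your level decomposition reproduces the identity $\L_F=\Sigma_1-\Sigma_2$ of \autoref{eq:LL}. Your treatment of $\Sigma_2=\sigma_{u+1}(v)-\sigma_u(v)$ at integer levels, via \autoref{p:Latt_inside} with $P_1=\pol(F)$, \autoref{cor:asymp_dif_Ehr} and \autoref{rem:complexitydown}, is exactly the paper's.

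You diverge from the paper at $\Sigma_1=h(u+1,v)-h(u,v)$. The paper uses \autoref{p:Latt_inside} as a black box, which only gives $h=g+o(\dim F+1)$. It then upgrades this with \autoref{l:polyuvgg0}: $h$ is eventually a polynomial, by finiteness of $\kk[\tilde S]$ over $\kk[S]$ and bigraded Hilbert polynomials. Since $g$ is a polynomial of degree at most $\dim F+1$, $h-g$ is eventually a polynomial of degree at most $\dim F$, and its $u$-difference is $o(\dim F)$.

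You instead observe that the ceiling argument preserves the level of $w$, since $\lceil\ba\rceil-\ba\in\RR_{\gs 0}\Delta_F$. Writing $g_s(v)=\#\Lat{sF\setminus(sF+v\Delta_F^{\gs})}$ and letting $k$ be the number of unbounded directions, this gives $0\ls\sigma_s(v)-g_s(v)\ls g_s(v+k)-g_s(v)$ at every level $s$. Summing over the boundedly many lattice levels in $(u,u+1]$ then yields the $o(\dim F)$ comparison directly. Your route is purely convex-geometric, avoids the commutative algebra, and gives an explicit $O(t^{\dim F-1})$ error. The paper's route is shorter because it reuses \autoref{p:Latt_inside} unchanged.

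The price of your route is the band estimate $g_s(v+k)-g_s(v)=O\bigl((s+v+1)^{\dim F-1}\bigr)$, uniformly in the real level $s$, which you only assert. At integer levels it follows at once, since $g_s(v)$ is a polynomial of degree at most $\dim F$ by \autoref{p:passing_to_Ehrhart}. At fractional levels you only get quasi-polynomials of a non-full-dimensional set, whose top coefficients need not be constant, so shifting $v$ need not lower the degree.

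A direct argument fills this in.
\begin{itemize}
\item For $x\in sF$ put $\delta(x)=\max\{|c| : c\in\RR_{\gs 0}\Delta_F,\ x-c\in s\pol(F)\}$, where $|c|$ is the coordinate sum. Then $\{\delta\gs v\}=s\pol(F)+v\Delta_F^{\gs}$.
\item Since $\delta(x+\varepsilon\fe_{u_1})\gs\delta(x)+\varepsilon$, no relative interior point of $s\pol(F)+v\Delta_F^{\gs}$ has $\delta=v$.
\item Hence $\{\delta=v\}$ lies in the relative boundary of $s\pol(F)+v\Delta_F^{\gs}$ intersected with $s\pol(F)+v\Delta_F$. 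This is a union of a bounded number (independent of $s,v$) of convex sets of dimension at most $\dim F-1$ and diameter $O(s+v)$.
\item The band $\{v\ls\delta<v+k\}$ is contained in $\{\delta=v\}+k\Delta_F^{\ls}$: peel off part of the $C$-component to reach depth exactly $v$.
\item Covering by unit cubes then gives the bound.
\end{itemize}
With this added, your route is a valid, more elementary alternative to the paper's use of \autoref{l:polyuvgg0}.
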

		\begin{proof}
			If $F$ is bounded, our convention in  \autoref{eq:conv_bounded} implies 
			$\L_F(u,v)=
			\#
			\Lat{\cone_u(F)}$. Therefore,
			\[
			\L_F(u,v)
			=
			\bigl(\#\Lat{(u+1)\pyr(F)}-\#\Lat{u\pyr(F)}\bigr) - 
			\bigl(\#\Lat{(u+1)F}-\#\Lat{uF}\bigr),
			\] 
			and then the conclusion follows from \autoref{thm:mixed_Ehrhart_structure},  \autoref{rem:empty_zero_MV}, and \autoref{rem:complexitydown}.
			
			For the rest of the proof we assume $F$ is unbounded. 
			Since $sF+v\Delta_F^{\gs}\subseteq sF$ for every $s\in \RR_{>0}$ and $v\in \ZZ_{>0}$, from \autoref{eq:coneu} we obtain
			\begin{align}
			\label{eq:LL}\L_F(u,v)=
			&\Bigl(\#\left[\Lat{(u+1)\epy(F)}
			\setminus 
			\left(
			\Lat{(u+1)\epy(F)} + 
			v\Delta_F^{\gs}
			\right)\right]\\ 	\nonumber
			&\phantom{MMMMMMMMMMM}
			-\#\left[\Lat{u\epy(F)}
			\setminus 
			\left(
			\Lat{u\epy(F)} + 
			v\Delta_F^{\gs}
			\right)\right]\bigr)
			\\ \nonumber
			&\phantom{MM}
			-\Bigl(\#\left[\Lat{(u+1)F}
			\setminus 
			\left(
			\Lat{(u+1)F} + 
			v\Delta_F^{\gs}
			\right)\right]\\ 	 \nonumber
			&\phantom{MMMMMMMMMMMMMMMM}
			-\#\left[\Lat{uF}
			\setminus 
			\left(
			\Lat{uF} + 
			v\Delta_F^{\gs}
			\right)\right]\bigr)
			\end{align}
		Let us denote by $\Sigma_1$ and $\Sigma_2$ the first and second differences on the right hand side of \autoref{eq:LL}, so that $\L_F(u,v)=\Sigma_1-\Sigma_2$. 
			
		Applying \autoref{p:Latt_inside}  with $P_1 = \pol(F)$ and $P_2 = \Delta_F$ we obtain 
			\[
			\#\left[\Lat{uF}
			\setminus 
			\left(
			\Lat{uF} + 
			v\Delta_F^{\gs}
			\right)\right] = 
			\#\Lat{
				uF\setminus (uF+ v\Delta_F^{\gs})
			}
			+ o(\dim(F)).
			\]
		Thus, by \autoref{cor:asymp_dif_Ehr} and \autoref{rem:complexitydown}, $\Sigma_2 = o(\dim(F))$.

		Applying \autoref{p:Latt_inside} with $P_1 = \bdpyr(F)$ and $P_2 = \Delta_F$ we obtain
			\[
			\#\left[\Lat{u\epy(F)}
			\setminus 
			\left(
			\Lat{u\epy(F)} + 
			v\Delta_F^{\gs}
			\right)\right]
			=
			\#\Lat{
				u\epy(F)\setminus (u\epy(F)+ v\Delta_F^{\gs})
			}
			+ o(\dim(F)+1);
			\]
			indeed, 
			\begin{equation}\label{eq:dim_compu}
				\dim\bigl(\bdpyr(F)+\Delta_F^{\ls}\bigr)\ls \dim\bigl(\pol(F)+\Delta_F^{\ls}\bigr)+1=\dim(F)+1.
			\end{equation}
			Furthermore, applying \autoref{p:passing_to_Ehrhart} 
			with $P_1 = \bdpyr(F)$ and $P_2 = \Delta_F$ we obtain 
			\[
			\#\Lat{
				u\epy(F)\setminus (u\epy(F)+ v\Delta_F^{\gs})} = 
			E_{\bdpyr(F), \Delta_F^{\ls}}\bigl(u,v\bigr) -
			E_{\bdpyr(F), \Delta_F}\bigl(u,v\bigr).
			\]
			Thus, by \autoref{l:polyuvgg0}, \autoref{rem:complexitydown}, \autoref{thm:mixed_Ehrhart_structure}, and \autoref{eq:dim_compu}, 
				\begin{align*}
				\Sigma_1&=
				\bigl(
				E_{\bdpyr(F), \Delta_F^{\ls}}\bigl(u+1,v\bigr) &&\hspace{-10pt}-
				E_{\bdpyr(F), \Delta_F^{\ls}}\bigl(u,v\bigr)
				\bigr)\\ 
				&&&-\bigl(
				E_{\bdpyr(F), \Delta_F}\bigl(u+1,v\bigr) - 
				E_{\bdpyr(F), \Delta_F}\bigl(u,v\bigr)
				\bigr)
				+ o(\dim(F))\\
				&=o(\dim(F)+1),&&
			\end{align*}
		finishing the proof.
		\end{proof}

		For the proof of our next proposition we need the following  lemmas with general facts about the dilations of $\Gamma$. We first introduce some notation.
		
		\begin{notation}\label{not:supp_hyp}
			For each $F\in \F_{d-1}(\Gamma)$ we denote by $\ell_F^{-1}(1)$ the \emph{normalized supporting hyperplane of $F$}, that is, $\ell_F:\RR^d\to \RR$  is a linear form and $F=\Gamma \cap \ell_F^{-1}(1)$. In particular, $$\Gamma =
			\bigl\{\ba\in \RR^d_{\gs0}\mid \ell_F(\ba)\gs 1 \text{ for every }F\in \F_{d-1}(\Gamma)
			\bigr\}.$$
		\end{notation}
		
		\begin{remark}\label{r:bdunbd}
			$\fe_i$ is an unbounded direction of $F\in \F_{d-1}(\Gamma)$ if and only if $\ell_F(\fe_i)=0$. Since $\ell_F$ is normalized, we have $\ell_F(\fe_i)>0$ if $\fe_i$ is bounded. 
		\end{remark}
		
		\begin{lemma}\label{l:uGamma}
			Let  $u\in \ZZ_{>0}$.
			\begin{enumerate}
				\item[{\rm (1)}] Let $F\in \F_{d-1}(\Gamma)$. Then,  $\ba\in \RR_{> 0}F$ if and only if  $\ba\in \RR^d_{\gs0}$ and $0<\ell_F(\ba)\ls \ell_{G}(\ba)$ for every $G\in \F_{d-1}(\Gamma)$. 
				\item[{\rm (2)}] $
				u\Gamma\setminus (u+1)\Gamma
				=
				\bigcup_{F\in \F_{d-1}(\Gamma)} \cone_u(F).
				$
			\end{enumerate} 
		\end{lemma}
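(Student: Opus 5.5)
Both parts follow from the description $\Gamma=\{\ba\in\RR^d_{\gs0}\mid \ell_G(\ba)\gs1 \text{ for all } G\in\F_{d-1}(\Gamma)\}$ of \autoref{not:supp_hyp}. Hence $s\Gamma=\{\ba\gs\0\mid \ell_G(\ba)\gs s \ \forall G\}$ for $s>0$. For $\ba\in\RR^d_{\gs0}$ set $m(\ba)=\min_G \ell_G(\ba)$. By \autoref{r:bdunbd} each $\ell_G$ has nonnegative coefficients, so $m(\ba)\gs0$.

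For (1), suppose first $\ba=s\fp$ with $s>0$ and $\fp\in F$. Then $\ba\gs\0$ and $\ell_F(\ba)=s>0$. Since $\fp\in\Gamma$, $\ell_G(\fp)\gs 1=\ell_F(\fp)$ for every $G$, and multiplying by $s$ gives $\ell_F(\ba)\ls\ell_G(\ba)$. Conversely, let $\ba\gs\0$ with $0<s:=\ell_F(\ba)\ls\ell_G(\ba)$ for all $G$, and put $\fp=\ba/s$. Then $\fp\gs\0$, $\ell_G(\fp)\gs1$ for all $G$, so $\fp\in\Gamma$. Also $\ell_F(\fp)=1$, so $\fp\in\Gamma\cap\ell_F^{-1}(1)=F$ and $\ba\in sF$.

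For (2), \autoref{eq:coneu} gives $\cone_u(F)=\{s\fp\mid u\ls s<u+1,\ \fp\in F\}$. By (1) this is exactly the set of $\ba\gs\0$ with $u\ls\ell_F(\ba)<u+1$ and $\ell_F(\ba)=m(\ba)$. On the other hand, $u\Gamma\setminus(u+1)\Gamma=\{\ba\gs\0\mid u\ls m(\ba)<u+1\}$.
\begin{itemize}
\item[$\supseteq$:] If $\ba\in\cone_u(F)$, then $m(\ba)=\ell_F(\ba)\in[u,u+1)$, so $\ba\in u\Gamma\setminus(u+1)\Gamma$.
\item[$\subseteq$:] Given $\ba$ with $m(\ba)\in[u,u+1)$, pick $F$ attaining the minimum $m(\ba)$. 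Then $\ell_F(\ba)=m(\ba)\gs u>0$ since $u\gs1$. By (1), $\ba\in\ell_F(\ba)F$ with $\ell_F(\ba)\in[u,u+1)$, so $\ba\in\cone_u(F)$.
\end{itemize}

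The one point needing care is the facet description of $\Gamma$ in \autoref{not:supp_hyp}. Because $\Gamma$ is a full-dimensional polyhedron with recession cone $\RR^d_{\gs0}$, it is cut out by the coordinate inequalities together with the supporting inequalities of the non-coordinate facets. Every non-coordinate facet has a supporting hyperplane not through $\0$, since $\0\notin\Gamma$ because $I$ is proper, so it can be normalized to $\ell_F=1$. I would take this description as given from the notation, and the remaining steps are the routine verifications above.
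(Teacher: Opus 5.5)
Your proof is correct and follows the paper's argument essentially step by step: for (1) you rescale $\ba$ by $\ell_F(\ba)^{-1}$ and use the facet description of $\Gamma$, and for (2) you pick a facet attaining $\min_G\ell_G(\ba)$ and apply (1). One small aside on your closing remark: the reason a non-coordinate facet's hyperplane misses $\0$ is not $\0\notin\Gamma$, since coordinate facets lie in hyperplanes through $\0$ as well. The actual reason is that $\ell_F$ has nonnegative coefficients, so a hyperplane $\ell_F=0$ meets $\Gamma\subseteq\RR^d_{\gs0}$ only inside a coordinate plane; as the paper takes this description as given in \autoref{not:supp_hyp}, this does not affect the proof.
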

		\begin{proof}
			(1): Since either side of the equivalence implies that $s:=\ell_F(\ba)>0$, we may assume this throughout. Now, $\ba\in \RR_{>0}F$, if and only if $s^{-1}\ba\in F$,  if and only if $\ell_{F}(s^{-1}\ba)=1\ls \ell_{G}(s^{-1}\ba)$ for every $G\in \F_{d-1}(\Gamma)$, if and only if $\ell_{F}(\ba)=s\ls \ell_{G}(\ba)$ for every 
			$G\in \F_{d-1}(\Gamma)$.
			
			(2): Let $\ba\in \cone_u(F)\subset u\Gamma$ for some $F\in \F_{d-1}(\Gamma)$. Then $u\ls \ell_F(\ba)<u+1$ and, by (1),   $\ell_{G}(\ba)\gs \ell_F(\ba)\gs u$ for any other $G\in \F_{d-1}(\Gamma)$. Thus, $\ba\in u\Gamma\setminus (u+1)\Gamma$.
			
			Conversely, let $\ba\in u\Gamma\setminus (u+1)\Gamma$, then $\ell_G(\ba)\gs u$ for every $G\in \F_{d-1}(\Gamma)$. 
			Let $F$ be the facet where $\min\{\ell_G(\ba)\mid G\in\F_{d-1}(\Gamma)\}$ is achieved.
			Since $\ba\in u\Gamma\subseteq \RR_{\gs 0}^d$ and $\ba \not\in (u+1)\Gamma$, we  have $\ell_{G}(\ba)<u+1$ for some $G\in \F_{d-1}(\Gamma)$. By minimality, $\ell_F(\ba)<u+1$. Thus, from (1) we obtain  $\ba \in \cone_u(F)$.
		\end{proof}
		
		\begin{lemma}\label{l:bounded_dir_goes_out}
			Let $F\in \F_{d-1}(\Gamma)$.
			\begin{enumerate}
				\item[{\rm (1)}]  Let   $\fe_i$ be a bounded direction of $F$. There exists $t_{F,i}\in \ZZ_{>0}$ such that 
				$\ba+v\fe_i\not\in\cone_u(F)$ for every   $u\in \ZZ_{>0}$, $\ba\in u\Gamma$, 
				and integer $v\gs t_{F,i}$. 
				
				\item[{\rm (2)}] Let $\fe_j$ be an unbounded direction of $F$. There exists $l_{F,j}\in \NN$ 
				such that if $\ba+v\fe_j\in \cone_u(F)$   for some $u\in  \ZZ_{>0}$, $v\in \NN$, and $\ba\in u\Gamma\setminus (u+1)\Gamma$, then $\ba+l_{F,j}\,\fe_j\in \cone_u(F)$. 
			\end{enumerate}
		\end{lemma}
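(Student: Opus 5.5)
Both parts reduce to linear inequalities for the forms $\ell_G$ of \autoref{not:supp_hyp}, combined with the description of $\cone_u(F)$ from \autoref{l:uGamma}. By \autoref{l:uGamma}(1) and \autoref{eq:coneu}, a point $\ba\in\RR^d_{\gs 0}$ lies in $\cone_u(F)$ exactly when $u\ls \ell_F(\ba)<u+1$ and $\ell_F(\ba)\ls \ell_G(\ba)$ for every $G\in\F_{d-1}(\Gamma)$.

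\emph{Part (1).} If $\fe_i$ is a bounded direction of $F$, then $c:=\ell_F(\fe_i)>0$ by \autoref{r:bdunbd}. For $\ba\in u\Gamma$ we have $\ell_F(\ba)\gs u$, so
\[
\ell_F(\ba+v\fe_i)\gs u+vc .
\]
I take $t_{F,i}=\lceil 1/c\rceil$. Then for every integer $v\gs t_{F,i}$ we get $\ell_F(\ba+v\fe_i)\gs u+1$, so $\ba+v\fe_i\notin\cone_u(F)$. This bound is uniform in $u$ and $\ba$, which is what the statement requires.

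\emph{Part (2).} Let $\fe_j$ be an unbounded direction, so $\ell_F(\fe_j)=0$. The candidate point $\ba+l\fe_j$ automatically satisfies two of the conditions:
\[
\ell_F(\ba+l\fe_j)=\ell_F(\ba)=\ell_F(\ba+v\fe_j)\in[u,u+1),
\]
and $\ba+l\fe_j\in\RR^d_{\gs 0}$. It remains to find a uniform $l$ with $\ell_G(\ba+l\fe_j)\gs \ell_F(\ba)$ for all facets $G$.

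I split the facets $G$ according to the sign of $\ell_G(\fe_j)$, which is $\gs 0$ since $\Gamma+\RR^d_{\gs 0}=\Gamma$.

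\textbf{Case $\ell_G(\fe_j)=0$.} Then
\[
\ell_G(\ba+l\fe_j)=\ell_G(\ba+v\fe_j)\gs \ell_F(\ba+v\fe_j)=\ell_F(\ba),
\]
and the condition holds for any $l$.

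\textbf{Case $\ell_G(\fe_j)>0$.} Here I use that $\ba\in u\Gamma$, so $\ell_G(\ba)\gs u$. Then
\[
\ell_G(\ba+l\fe_j)\gs u+l\,\ell_G(\fe_j)\gs u+1>\ell_F(\ba)
\]
as soon as $l\gs 1/\ell_G(\fe_j)$.

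Set $l_{F,j}=\max_G\lceil 1/\ell_G(\fe_j)\rceil$, the maximum taken over the finitely many $G$ with $\ell_G(\fe_j)>0$ (or $0$ if there are none).

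The one point that needs care is when $v<l_{F,j}$, since then $\ba+l_{F,j}\fe_j$ is not simply $\ba+v\fe_j$ shifted in the positive direction. The argument above never compared $l$ with $v$: it used only $\ell_G(\ba)\gs u$ and $\ell_F(\ba)<u+1$, so it covers $v<l_{F,j}$ as well. The hypothesis $\ba\notin(u+1)\Gamma$ is not actually needed. I expect the main obstacle to be verifying that this uniform choice of $l$ works for all $G$ simultaneously, and the case split by the sign of $\ell_G(\fe_j)$ is what handles it.
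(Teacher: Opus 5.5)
Your proof is correct, and your part (1) is the paper's argument with the same constant $t_{F,i}=\lceil \ell_F(\fe_i)^{-1}\rceil$.

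For part (2) you take a more direct route than the paper. The paper argues by elimination. It introduces $g(t)=\min_G \ell_G(\ba+t\fe_j)$ and uses the covering $u\Gamma\setminus(u+1)\Gamma=\bigcup_G\cone_u(G)$ from \autoref{l:uGamma}(2). With these it shows that every point of the ray $\ba+t\fe_j$ lies either in $\cone_u(F)$ or in some $\cone_u(F_i)$ for a facet $F_i$ in which $\fe_j$ is a bounded direction. It then invokes part (1) to rule out the cones $\cone_u(F_i)$ at $t=l_{F,j}$.

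You instead check the inequality description of $\cone_u(F)$ coming from \autoref{l:uGamma}(1) directly at the point $\ba+l\fe_j$, splitting the facets by the sign of $\ell_G(\fe_j)$. Your case $\ell_G(\fe_j)>0$ is exactly the part (1) estimate applied to $G$. Your case $\ell_G(\fe_j)=0$ replaces the paper's analysis of $g$ with the invariance $\ell_G(\ba+l\fe_j)=\ell_G(\ba+v\fe_j)\gs \ell_F(\ba)$.

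Both routes produce the same constant: the maximum of $0$ and the numbers $\lceil \ell_G(\fe_j)^{-1}\rceil$ over the facets $G$ with $\ell_G(\fe_j)>0$. Your version is shorter and does not need \autoref{l:uGamma}(2). It also makes clear why the hypothesis $\ba\notin(u+1)\Gamma$ is redundant: it already follows from $\ell_F(\ba)=\ell_F(\ba+v\fe_j)<u+1$. The paper's version tracks which cones $\cone_u(G)$ the ray passes through, but rests on the same two estimates.
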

		\begin{proof}
			
			We first prove (1). 
			Since $\fe_i$ is a bounded direction of $F$, one has $\ell_F(\fe_i)>0$. If  $\ba\in u\Gamma$ and $v\gs t_{F,i}:=\lceil\ell_F(\fe_i)^{-1}\rceil$, then $\ell_F(\ba+v\fe_i)
			\gs u+1
			$, 
			and so $\ba+v\fe_i\not\in \cone_u(F)$.

			We continue with (2). Fix $u\in  \ZZ_{>0}$, $v\in \NN$, and $\ba\in u\Gamma\setminus (u+1)\Gamma$ with $\ba+v\fe_j\in \cone_u(F)$.  
			Consider the function 
			\[g:\RR_{\gs 0}\to \RR_{>0},\qquad g(t)=\min\{\ell_{G}(\ba+t\fe_j)\mid G\in \F_{d-1}(\Gamma)\}. 
			\]
			Since $\fe_j$ is an unbounded direction of $F$, we have 
			\[
			\ell_F(\ba+t\fe_j)=\ell_F(\ba)=\ell_F(\ba+v\fe_j)\in [u,u+1)
			\quad \text{ for every }t\gs 0
			\] 
			Moreover,  
			for every  $t\gs v$ we have $\ba+t\fe_j \in \cone_u(F)$ and then,  by   \autoref{l:uGamma}(1),    $g(t)=\ell_F(\ba)$.

			Let $0\ls t<v$. 
			If $g(t)=\ell_F(\ba)$, then $\ba+t\fe_j\in  \RR_{>0}F$ by \autoref{l:uGamma}(1). 
			Thus, $\ba+t\fe_j\in \cone_u(F)$.  If $g(t)\neq \ell_F(\ba)=g(v)$, then $\ba+t\fe_j$ is not in $\cone_u(G)$ for any $G\in \F_{d-1}(\Gamma)$ with  $\fe_j$  unbounded, as otherwise  $\ba+v\fe_j\in \cone_u(G)$ and so $g(t)=\ell_G(\ba)=g(v)$ by \autoref{l:uGamma}(1), a contradiction. 
			
			Let $F_1,\ldots, F_r$ be the elements of $\F_{d-1}(\Gamma)$ for which  $\be_j$  is bounded. 
			From the  analysis above and \autoref{l:uGamma}(2), we conclude   $\ba+t\be_j\in \cone_u(F)\cup\bigcup_{i=1}^r\cone_u(F_i)$ for every $t\in \RR_{\gs 0}$. 
			Thus, setting $l_{F,j}:=\max\{0, t_{F_1,j},\ldots, t_{F_r,j}\}$ with $t_{F_i,j}$  as in (1), 
			we have  $\ba+l_{F,j}\,\be_j\in \cone_u(F)$, finishing the proof. 
		\end{proof}

		The following auxiliary lemma helps us simplify the computations in the proof of the next proposition. The definition of the function $\L_F(u,v)$ was given in \autoref{eq:Lfunction}.

		\begin{lemma}\label{l:super_pol}
			Let $\Delta=\RR_{\gs 1}\conv(\fe_1,\ldots, \fe_d)$. For every $F\in \F_{d-1}(\Gamma)$  we have 
			\[
			\#\left[
			\Lat{\cone_u(F)}
			\setminus 
			\left(
			\Lat{u\Gamma\setminus(u+1)\Gamma} + v\Delta
			\right)
			\right]=\L_F(u,v)+o(d-1).
			\]
		\end{lemma}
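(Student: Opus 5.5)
We compare the two sets $A_1=\Lat{\cone_u(F)}\setminus(\Lat{u\Gamma\setminus(u+1)\Gamma}+v\Delta)$ and $A_2=\Lat{\cone_u(F)}\setminus(\Lat{\cone_u(F)}+v\Delta_F^{\gs})$, whose cardinality is $\L_F(u,v)$. We show that each is contained in the other up to a shift of $v$ by a constant, and then absorb the error using \autoref{p:EG_pol} and \autoref{rem:complexitydown}. The guiding idea is this: to reach a point of $\cone_u(F)$ from a lattice point of $u\Gamma\setminus(u+1)\Gamma$ by adding a large vector of $v\Delta$, the vector must essentially point in the unbounded directions of $F$. Moving along bounded directions eventually exits $\cone_u(F)$ by \autoref{l:bounded_dir_goes_out}(1).

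\emph{Step 1: $A_1\subseteq A_2$, up to the shift.} Suppose $\fx\in\Lat{\cone_u(F)}$ with $\fx=\ba+\bb'$, where $\ba\in\Lat{\cone_u(F)}$ and $\bb'\in v\Delta_F^{\gs}$. Since $\cone_u(F)\subseteq u\Gamma\setminus(u+1)\Gamma$ by \autoref{l:uGamma}(2), and $v\Delta_F^{\gs}\subseteq v\Delta$, we get $\fx\notin A_1$. Hence $A_1\subseteq A_2$ directly, and $\#A_1\ls \L_F(u,v)$.

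\emph{Step 2: reverse inclusion with a shift.} We aim to show $A_2(v+c)\subseteq A_1(v)$ for a constant $c$, i.e.\ that every $\fx\in\Lat{\cone_u(F)}$ with $\fx=\ba+\bb$, $\ba\in\Lat{u\Gamma\setminus(u+1)\Gamma}$, $\bb\in(v+c)\Delta$, is of the form $\ba'+\bb''$ with $\ba'\in\Lat{\cone_u(F)}$ and $\bb''\in v\Delta_F^{\gs}$. We may replace $\bb$ by a lattice vector $\bb\in\NN^d$ with $|\bb|\gs v+c$, since $\fx-\ba$ is integral and coordinatewise nonnegative. Write $\bb=\bb_{\rm bd}+\bb_{\rm ub}$, splitting into bounded and unbounded coordinates for $F$.

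\emph{Step 2a.} If some bounded coordinate satisfied $b_i\gs t_{F,i}$, then, using $\ba+\bb-b_i\fe_i\in u\Gamma$ (since $\Gamma+\RR^d_{\gs0}=\Gamma$), \autoref{l:bounded_dir_goes_out}(1) gives $\fx\notin\cone_u(F)$, a contradiction. Hence $|\bb_{\rm bd}|<T:=\sum_i t_{F,i}$, so $|\bb_{\rm ub}|\gs v+c-T$.

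\emph{Step 2b.} Set $\ba_0=\ba+\bb_{\rm bd}\in u\Gamma$, a lattice point. If $\ba_0\in(u+1)\Gamma$, then $\fx\in(u+1)\Gamma$, which is impossible. Thus $\ba_0\in u\Gamma\setminus(u+1)\Gamma$. Now we move the unbounded coordinates one direction at a time, applying \autoref{l:bounded_dir_goes_out}(2) iteratively. Ordering the unbounded directions $\fe_{j_1},\ldots,\fe_{j_k}$, we want to show that $\ba':=\ba_0+\sum_s \min(b_{j_s},l_{F,j_s})\fe_{j_s}$ lies in $\cone_u(F)$. The remainder $\bb''=\bb_{\rm ub}-(\ba'-\ba_0)$ then has $|\bb''|\gs v+c-T-\sum_j l_{F,j}$, and it is supported on unbounded directions. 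Choosing $c=T+\sum_j l_{F,j}$ gives $\bb''\in v\Delta_F^{\gs}$.

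The iteration is the main obstacle, because \autoref{l:bounded_dir_goes_out}(2) handles a single direction and requires the intermediate point to lie in $u\Gamma\setminus(u+1)\Gamma$. I would handle it by applying the lemma to $\ba_0+(\text{partial sums})$ together with the remaining directions already added at full length. Since $\ell_F$ is constant along unbounded directions, the intermediate points stay in $u\Gamma\setminus(u+1)\Gamma$. If this proves awkward, the alternative is to re-prove the argument of \autoref{l:bounded_dir_goes_out}(2) directly for moves along the whole cone $\RR_{\gs0}\Delta_F$.

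\emph{Step 3: conclusion.} Steps 1 and 2 give $\L_F(u,v+c)\ls\#A_1\ls\L_F(u,v)$. By \autoref{p:EG_pol}, $\L_F$ equals a polynomial of degree at most $d-1$ plus $o(d-1)$, since $\dim F=d-1$. \autoref{rem:complexitydown}, applied in the $v$ variable, then gives $\L_F(u,v+c)=\L_F(u,v)+o(d-1)$, which proves the lemma.
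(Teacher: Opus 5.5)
Your proposal is correct and follows the paper's proof essentially step for step: the easy containment first, then \autoref{l:bounded_dir_goes_out}(1) to bound the bounded coordinates and \autoref{l:bounded_dir_goes_out}(2) to truncate the unbounded ones at a fixed length, with the resulting constant shift in $v$ absorbed via \autoref{p:EG_pol} and \autoref{rem:complexitydown}; your one-direction-at-a-time iteration of part (2), justified by $\ell_F$ being constant along unbounded directions, spells out a step the paper leaves implicit. One indexing slip: what your Step 2 actually proves is $A_2(v)\subseteq A_1(v+c)$, not $A_2(v+c)\subseteq A_1(v)$, so the sandwich in Step 3 should read $\L_F(u,v-c)\ls\#A_1\ls\L_F(u,v)$; the version you wrote is the wrong way around, since $\L_F$ is nondecreasing in $v$, but the corrected version is handled by \autoref{rem:complexitydown} just as well.
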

		\begin{proof}
			If $F$ is bounded, by \autoref{l:bounded_dir_goes_out}(1) for  
			$v\gs t_{F,1}+\ldots + t_{F,d}$ we have
			\[
			\#\left[
			\Lat{\cone_u(F)}
			\setminus 
			\left(
			\Lat{u\Gamma\setminus(u+1)\Gamma} + v\Delta
			\right)
			\right]
			=
			\#\Lat{\cone_u(F)}
			= \L_F(u,v),
			\]	
			and so the conclusion follows. 
			
			For the rest of the proof we assume $F$ is unbounded. 
			Since $\Delta_F^{\gs}\subseteq \Delta$ and $\cone_u(F)\subseteq u\Gamma\setminus (u+1)\Gamma$, from \autoref{l:uGamma}(2), it follows that 
			\[
			\#\left[
			\Lat{\cone_u(F)}
			\setminus 
			\left(
			\Lat{u\Gamma\setminus(u+1)\Gamma} + v\Delta
			\right)
			\right]\ls \L_F(u,v).
			\]
			
			By \autoref{p:EG_pol} and \autoref{rem:complexitydown} it suffices  to show that there exists $c\in \NN$ such that for every $v\gs c$ we have 
			\[
			\#\left[
			\Lat{\cone_u(F)}
			\setminus 
			\left(
			\Lat{u\Gamma\setminus(u+1)\Gamma} + v\Delta
			\right)
			\right]\gs \L_F(u,v-c).
			\]
			Assume without loss of generality that the unbounded directions of $F$ are  $\be_{r+1},\ldots \be_{d}$. Let 
			$\ba\in \Lat{u\Gamma\setminus(u+1)\Gamma}$  and $\bb=(b_1,\ldots, b_d)\in \Latt{v\Delta}$ be such that $\ba+\bb\in \cone_{u}(F)$. By \autoref{l:bounded_dir_goes_out}(1) we have $b_i< t_{F,i}$ for every $1\ls i\ls r$.
			Moreover, by \autoref{l:bounded_dir_goes_out}(2), setting  $l=\max\{l_{F,r+1},\ldots,  l_{F,d}\}$ and 
			$\bb'=\bigl(b_1,\ldots, b_r,\min\{l,b_{r+1}\}, \cdots, \min\{l,b_{d}\}\bigr)$, we have  $\ba+\bb'\in \cone_{u}(F)$. 
			
			Set $c= t_{F,1}+\ldots + t_{F,r}+(d-r)l$ and assume $v> c$. 
			We observe that $\bb-\bb'\in (v-c)\Delta_F^{\gs}$ and so $\ba+\bb=\ba+\bb'+(\bb-\bb')\in \cone_{u}(F)+(v-c)\Delta_F^\gs$. Thus,
			\[
			\Lat{\cone_u(F)}
			\setminus 
			\left(
			\Lat{\cone_u(F)} + (v-c)\Delta_F^{\gs }
			\right)
			\subseteq
			\Lat{\cone_u(F)}
			\setminus 
			\left(
			\Lat{u\Gamma\setminus(u+1)\Gamma} + v\Delta
			\right),
			\] 
			finishing the proof.
		\end{proof}

		
		\begin{corollary}\label{c:coordinateBound}
			Let $\Delta=\RR_{\gs 1}\conv(\fe_1,\ldots, \fe_d)$. For each $1\ls i\ls d$ let $H_{i,u}$ be the intersection of $u\Gamma\setminus (u+1)\Gamma$ with the coordinate plane $\{x_i=0\}$. Then
			\[
			\C_i(u,v):= 
			\#\left[
			\Lat{H_{i,u}}
			\setminus 
			\left(
			\Lat{u\Gamma\setminus(u+1)\Gamma} + v\Delta
			\right)
			\right]=o(d-1).
			\]
		\end{corollary}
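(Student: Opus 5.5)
The plan is to cover $H_{i,u}$ by finitely many sets of the form $\cone_u(F')$, where each $F'$ is a face of $\Gamma$ of dimension at most $d-2$. Each piece is then controlled by $\L_{F'}(u,v)$, which \autoref{p:EG_pol} shows is $o(\dim(F')+1)\subseteq o(d-1)$.

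\emph{Step 1 (covering).} Let $\ba\in \Lat{H_{i,u}}$. By \autoref{l:uGamma}(2) there is a facet $F\in\F_{d-1}(\Gamma)$ with $\ba\in\cone_u(F)$. By \autoref{eq:coneu} this gives $\ba=s\fp$ for some $s\in[u,u+1)$ and $\fp\in F$. Since $a_i=0$ and $s>0$, we get $p_i=0$, so $\fp\in F':=F\cap\{x_i=0\}$. Because $\Gamma\subseteq\RR^d_{\gs0}$, the hyperplane $\{x_i=0\}$ supports $\Gamma$. Hence $F'$ is a face of $\Gamma$, and it is a proper face of $F$ because $F$ is non-coordinate. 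Thus $F'$ is nonempty with $\dim F'\ls d-2$, and $\ba\in\cone_u(F')=\bigcup_{u\ls s<u+1}sF'$. Consequently
\[
\C_i(u,v)\ls\sum_{F\in\F_{d-1}(\Gamma),\ F'\neq\emptyset}\#\left[\Lat{\cone_u(F')}\setminus\left(\Lat{u\Gamma\setminus(u+1)\Gamma}+v\Delta\right)\right].
\]

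\emph{Step 2 (comparison with $\L_{F'}$).} Write $F'$ as in \autoref{eq:face_decomp}, which is possible for any face of $\Gamma$. Then define $\Delta_{F'}$ and $\Delta_{F'}^{\gs}$ as before. Two containments hold: $\Delta_{F'}^{\gs}\subseteq\Delta$, and $\cone_u(F')\subseteq\cone_u(F)\subseteq u\Gamma\setminus(u+1)\Gamma$. Together they give
\[
\Lat{\cone_u(F')}+v\Delta_{F'}^{\gs}\subseteq \Lat{u\Gamma\setminus(u+1)\Gamma}+v\Delta .
\]
So each summand above is bounded by the analogue of $\L_{F'}(u,v)$ from \autoref{eq:Lfunction}. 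Here we use the containment $sF'+v\Delta_{F'}^{\gs}\subseteq sF'$; if $F'$ is bounded, the convention \autoref{eq:conv_bounded} makes the summand $\#\Lat{\cone_u(F')}$.

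\emph{Step 3 (asymptotics).} We apply \autoref{p:EG_pol} to $F'$ to get $\L_{F'}(u,v)=o(\dim(F')+1)$, which lies in $o(d-1)$ since $\dim F'\ls d-2$. A finite sum of nonnegative functions that are $o(d-1)$ is $o(d-1)$, and $\C_i\gs 0$, so the claim follows.

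The main obstacle is that $F'$ is a coordinate face, so it is not in $\F(\Gamma)$. For the same reason, \autoref{p:EG_pol}, \autoref{l:polyuvgg0} and \autoref{p:Latt_inside} are stated only for non-coordinate faces. I would check that their proofs use only two facts: the decomposition $F'=\pol(F')+\RR_{\gs0}\Delta_{F'}$ with lattice vertices, and $\Delta_{F'}$ being a simplex of coordinate vectors. Both hold for every face of $\Gamma$. If this extension became delicate, a fallback is to bound the summand crudely by $\#\Lat{\cone_u(F')}$ after truncating in the unbounded directions, using \autoref{l:bounded_dir_goes_out}(2) as in the proof of \autoref{l:super_pol}. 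The fallback keeps the count at dimension $\ls d-2$ in $u$ plus one degree in $v$.
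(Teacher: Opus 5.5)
Your argument is correct, but it follows a different route from the paper's.

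\textbf{The paper's route.} The paper never looks at faces of $\Gamma$ inside $\{x_i=0\}$. It first records that the full count over $u\Gamma\setminus(u+1)\Gamma$ is $o(d)$; this is \autoref{eq:od-1}, obtained from \autoref{l:uGamma}(2), \autoref{l:super_pol} and \autoref{p:EG_pol}. It then observes that $\C_i(u,v)$ is exactly this count for the ideal $I\cap\kk[x_1,\ldots,x_{i-1},x_{i+1},\ldots,x_d]$ in $d-1$ variables. That ideal has Newton polyhedron $\Gamma\cap\{x_i=0\}$, and since all summands are nonnegative, $\ba=\bb+\mathbf{c}$ with $a_i=0$ forces $b_i=c_i=0$. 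So the same bound one dimension down gives $o(d-1)$. What this buys is reuse of every result exactly as stated: the pieces that occur are cones over \emph{non-coordinate} facets of the smaller polyhedron, so the obstacle you flag never arises.

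\textbf{Your route.} You stay in $\RR^d$ and isolate the contribution of each face $F\cap\{x_i=0\}$. The cost is rerunning \autoref{p:EG_pol}, \autoref{l:polyuvgg0} and \autoref{p:Latt_inside} outside $\F(\Gamma)$.

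That extension does go through, but your checklist omits one ingredient. The proofs also use that the dilates $sF'$ are pairwise disjoint. This is what makes the second description of $\cone_u(F')$ in \autoref{eq:coneu}, and the splitting \autoref{eq:LL}, valid. For your faces it holds, because $F'\subseteq F\subseteq \ell_F^{-1}(1)$ and $\ell_F$ vanishes on the unbounded directions of $F'$. It fails for a coordinate face lying in no non-coordinate facet, e.g.\ $\Gamma\cap\{x_1=x_2=0\}$ for $I=(x_3)$, whose dilates are nested.

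\textbf{Your fallback.} As phrased, it would not suffice. If ``degree $\ls d-2$ in $u$ plus one degree in $v$'' means a bound like $u^{d-2}v$, that is not $o(d-1)$ in the sense of \autoref{d:o(ell)}. The truncated slab does have total degree at most $\dim F'\ls d-2$, but proving this amounts to \autoref{p:EG_pol} again. So your main line is the one to keep.
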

		\begin{proof}
			We may safely assume $\C_i(u,v)\neq o(0)$, as otherwise the result is trivial. 
			
			By \autoref{l:uGamma}(2), \autoref{l:super_pol}, and \autoref{p:EG_pol}, we have 
			\begin{equation}\label{eq:od-1}
				\#\left[
				\Lat{u\Gamma\setminus(u+1)\Gamma}
				\setminus 
				\left(
				\Lat{u\Gamma\setminus(u+1)\Gamma} + v\Delta
				\right)
				\right]=o(d).
			\end{equation}
			The result then follows by noticing that $\C_i(u,v)$ is equal to the analogous lattice counting in \autoref{eq:od-1} corresponding to the monomial ideal 
			$I\cap \kk[x_1,\ldots,x_{i-1},x_{i+1},\ldots, x_d]_{(x_1,\ldots,x_{i-1},x_{i+1},\ldots, x_d)}$.
		\end{proof}
		
		We are now ready to prove the next proposition.

		\begin{proposition}\label{p:Inequality}
			Let $\fm = (x_1,\ldots, x_d)$ be the maximal ideal of $\kk[x_1,\ldots,x_d]_{(x_1,\ldots, x_d)}$. 	For  $u,v\in \ZZ_{>0}$ we have
			\[
			\lambda_R\left(\frac{\overline{I^u}}{\fm^{v} \overline{I^u}+\overline{I^{u+1}}}\right)
			=
			\sum_{F\in\F_{d-1}(\Gamma)}
			\L_F(u,v)
			+ o(d-1).
			\]
		\end{proposition}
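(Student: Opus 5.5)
Since $I$ is monomial, $\overline{I^u}$ is the monomial ideal spanned by $\fx^\ba$ with $\ba\in\Lat{u\Gamma}$. Moreover, $\fm^v\overline{I^u}$ is spanned by monomials with exponents in $\Lat{u\Gamma}+\Latt{v\Delta}$, where $\Delta=\RR_{\gs 1}\conv(\fe_1,\ldots,\fe_d)$; note that $\Latt{v\Delta}$ is exactly the set of exponent vectors of degree $\gs v$. Hence the length on the left equals the number of lattice points of $u\Gamma$ that lie neither in $(u+1)\Gamma$ nor in $\Lat{u\Gamma}+v\Delta$.

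The first step is to replace $\Lat{u\Gamma}$ in the sum by $\Lat{u\Gamma\setminus(u+1)\Gamma}$. If $\ba\in\Lat{(u+1)\Gamma}$, then $\ba+\bb\in(u+1)\Gamma$ because $\Gamma+\RR^d_{\gs0}=\Gamma$. Such sums are therefore already removed, and the length equals
\[
\#\left[\Lat{u\Gamma\setminus(u+1)\Gamma}\setminus\left(\Lat{u\Gamma\setminus(u+1)\Gamma}+v\Delta\right)\right].
\]

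Next, apply \autoref{l:uGamma}(2) to cover $u\Gamma\setminus(u+1)\Gamma$ by the sets $\cone_u(F)$, $F\in\F_{d-1}(\Gamma)$, and use inclusion–exclusion. For each $F$, \autoref{l:super_pol} gives that the count of points of $\Lat{\cone_u(F)}$ outside $\Lat{u\Gamma\setminus(u+1)\Gamma}+v\Delta$ equals $\L_F(u,v)+o(d-1)$. Summing over facets gives the claimed main term, plus an error bounded by the points lying in $\cone_u(F)\cap\cone_u(G)$ for $F\neq G$ that survive the subtraction.

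The main obstacle is to show that these overlaps contribute $o(d-1)$. By \autoref{l:uGamma}(1), a point of $\cone_u(F)\cap\cone_u(G)$ lies in $\RR_{>0}(F\cap G)$, and $F\cap G$ is a face of dimension at most $d-2$. I would write this intersection as the union of $\cone_u(E)$ over the non-coordinate faces $E\subseteq F\cap G$ of dimension $\ls d-2$. Points lying in coordinate hyperplanes are absorbed by \autoref{c:coordinateBound}. For the faces $E$, I would bound the surviving count by $\L_E(u,v)$; this requires a version of \autoref{l:super_pol} for lower-dimensional faces, proved by the same bounded/unbounded direction argument of \autoref{l:bounded_dir_goes_out}, which should at least give the inequality. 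Then \autoref{p:EG_pol} gives $\L_E=o(\dim E+1)\subseteq o(d-1)$. I expect this step, adapting the facet arguments to lower-dimensional faces, to be the delicate part. A cruder fallback is to bound the surviving points in $\RR_{>0}(F\cap G)$ directly by an Ehrhart-type count of a polyhedral region of dimension $\ls d-1$ in which the unbounded directions are truncated at bounded length, as in \autoref{l:bounded_dir_goes_out}(2). In every case the error terms are finite in number, so they sum to $o(d-1)$, which finishes the proof.
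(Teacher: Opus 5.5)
Your proposal is correct and follows essentially the same route as the paper. You identify the length with a lattice-point count over $u\Gamma\setminus(u+1)\Gamma$, cover it by the sets $\cone_u(F)$ via \autoref{l:uGamma}(2), and apply \autoref{l:super_pol} facet by facet. Overlaps go into $\cone_u(E)$ for non-coordinate faces $E$ of dimension at most $d-2$, or into coordinate hyperplanes via \autoref{c:coordinateBound}, and \autoref{p:EG_pol} finishes. The step you flag as delicate is immediate and needs no lower-dimensional version of \autoref{l:super_pol}. Since $\Delta_E^{\gs}\subseteq\Delta$ and $\cone_u(E)\subseteq u\Gamma\setminus(u+1)\Gamma$, we have $\Lat{\cone_u(E)}+v\Delta_E^{\gs}\subseteq\Lat{u\Gamma\setminus(u+1)\Gamma}+v\Delta$, so the surviving count in $\cone_u(E)$ is at most $\L_E(u,v)$ by containment alone, which is exactly how the paper argues.
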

		\begin{proof}
			Let $\Delta=\RR_{\gs 1}\conv(\fe_1,\ldots, \fe_d)$ be the Newton polyhedron of $\fm$. 
			For  $u,v\in \ZZ_{>0}$ and  a subset $A\subseteq u\Gamma$ we use the notation 
			\[
			\D\bigl(A\bigr) = 
			\#\Lat{A}
			\setminus 
			\left(
			\Lat{u\Gamma\setminus(u+1)\Gamma} + v\Delta
			\right).
			\]
			For any $u\in \ZZ_{>0}$ the ideal  $\overline{I^u}$ is monomial and its set of  monomials  corresponds to $\Latt{u\Gamma}$, see  \cite[Propositions~1.4.2~and~1.4.6]{HunekeSwansonIntegral}. 
			Therefore, 
			\[
			\lambda_R\left(\frac{\overline{I^u}}{\fm^{v} \overline{I^u}+\overline{I^{u+1}}}\right)
			=
			\D\bigl(u\Gamma\setminus(u+1)\Gamma\bigr).
			\]
			Moreover, if $N$ is the cardinality of $\F_{ d-1}(\Gamma)$ and $\C_i(u,v)$ is as in \autoref{c:coordinateBound}, then 
			\begin{align*}
				0\ls	
				\sum_{F\in \F_{ d-1}(\Gamma)}
				\D\bigl(\cone_u(F)\bigr)
				-
				\D\bigl( u\Gamma\setminus(u+1)\Gamma\bigr)
				&\ls
				N\sum_{G\in \F_{ \ls d-2}(\Gamma)} 
				\D\bigl(\cone_u(G) \bigr)+ N\sum_{i=1}^d\C_i(u,v)\\
				&\ls N\sum_{G\in \F_{\ls d-2}(\Gamma)}  \L_G(u,v)+ N\sum_{i=1}^d\C_i(u,v),
			\end{align*}
			where the first  inequality follows from  
			\autoref{l:uGamma}(2) and the third one from the containments 
			$\Delta_G^\gs\subset \Delta$ and $\cone_{u}(G)\subset u\Gamma\setminus (u+1)\Gamma$. It remains to justify the second inequality, which we do below; the proof then follows from \autoref{p:EG_pol}, \autoref{l:super_pol}, and \autoref{c:coordinateBound}.
			
			Any point $\ba \in 
			\Lat{u\Gamma\setminus(u+1)\Gamma}
			\setminus 
			\left(
			\Lat{u\Gamma\setminus(u+1)\Gamma} + v\Delta
			\right) $ is counted on the left hand side  of the second inequality at most $N-1$ times, and it is not counted when it belongs to exactly one $\cone_u(F)$. Therefore, it suffices to show that if a point  belongs to more than one $	\cone_u(F)$ and it is not in any of the coordinate planes, then it belongs to at least one  $\cone_u(G)$ with  $G\in \F_{\ls d-2}(\Gamma)$. 
			
			Suppose $\ba\in \cone_u(F)\cap \cone_u(F')$ for different $F,F'\in \F_{d-1}(\Gamma)$. Then, by \autoref{l:uGamma}(1), $s:=\ell_F(\ba)=\ell_{F'}(\ba)\in [u,u+1)$, and $\ell_{F''}(\ba)\gs s$ for any other $F''\in \F_{d-1}(\Gamma)$. Therefore, $s^{-1}\ba\in F\cap F'$. If  $\ba$ is not in any of the coordinate planes, then it belongs to the non-coordinate face $F\cap F'$, which necessarily is of dimension at most $d-2$, finishing the proof. 
		\end{proof}
		
		The following is the main result of this section (and of this paper). We follow the notation introduced at the start of this section. 
		
		\begin{theorem}\label{thm:main}
			Let $I\subsetneq \kk[x_1,\ldots,x_d]_{(x_1,\ldots, x_d)}$ be a nonzero monomial ideal and let  $\Gamma$ be its Newton polyhedron.  
			For every $0\ls i\ls d$ we have 
			\[
			c_i(I)=
			\sum_{F\in\F_{d-1}(\Gamma)}d!\,\Bigl(\MV_d\bigl(\pyr(F)[d-i],\,\Delta_F^{\leqslant}\,[i]\bigr)-
			\MV_d\bigl(\pyr(F)[d-i],\,\Delta_F[i]\bigr)
			\Bigr).
			\]
		\end{theorem}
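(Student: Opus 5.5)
Combine \autoref{l:passing_to_int_closure} with \autoref{p:Inequality} and \autoref{p:EG_pol}, and then read off the coefficients of the top-degree homogeneous part. By \autoref{l:passing_to_int_closure}, for $u,v\gg0$,
\[
\lambda_R\left(\frac{\overline{I^u}}{\fm^{v}\overline{I^u}+\overline{I^{u+1}}}\right)=\sum_{i=0}^{d-1}\frac{c_i(I)}{(d-i-1)!\,i!}u^{d-i-1}v^i+(\text{lower terms}).
\]
By \autoref{p:Inequality} the left-hand side equals $\sum_{F\in\F_{d-1}(\Gamma)}\L_F(u,v)+o(d-1)$. For each facet $F$, \autoref{p:EG_pol} (with $\dim F=d-1$) replaces $\L_F(u,v)$, up to $o(d-1)$, by
\[
\bigl(E_{\pyr(F),\Delta_F^{\ls}}(u+1,v)-E_{\pyr(F),\Delta_F^{\ls}}(u,v)\bigr)-\bigl(E_{\pyr(F),\Delta_F}(u+1,v)-E_{\pyr(F),\Delta_F}(u,v)\bigr).
\]
Since $\pyr(F)$, $\Delta_F$ and $\Delta_F^{\ls}$ have lattice vertices, \autoref{thm:mixed_Ehrhart_structure} shows these Ehrhart functions are genuine polynomials of degree at most $d$. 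Their degree-$d$ parts are
\[
\sum_i\binom{d}{i}\MV_d\bigl(\pyr(F)[d-i],Q[i]\bigr)u^{d-i}v^i, \qquad Q\in\{\Delta_F^{\ls},\Delta_F\}.
\]
For bounded $F$ we use the conventions \autoref{eq:conv_bounded} and \autoref{rem:empty_zero_MV}.

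The next step is to apply the forward difference in $u$. The operator $p(u,v)\mapsto p(u+1,v)-p(u,v)$ sends the monomial $u^{d-i}v^i$ to $(d-i)u^{d-i-1}v^i$ plus terms of lower total degree. It also sends lower-degree terms to terms of degree at most $d-2$. So the sum over facets is a polynomial $Q(u,v)$ of degree at most $d-1$ whose degree-$(d-1)$ coefficient of $u^{d-i-1}v^i$ is
\[
\sum_F\binom{d}{i}(d-i)\Bigl(\MV_d\bigl(\pyr(F)[d-i],\Delta_F^{\ls}[i]\bigr)-\MV_d\bigl(\pyr(F)[d-i],\Delta_F[i]\bigr)\Bigr).
\]
Note that $\binom{d}{i}(d-i)=\frac{d!}{(d-i-1)!\,i!}$.

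The key remaining point, and the only real subtlety, is to justify comparing coefficients. We have two polynomials, $P$ from \autoref{l:passing_to_int_closure} (valid for $u,v\gg0$) and $Q$, whose difference is $o(d-1)$ in the sense of \autoref{d:o(ell)}. Fix $u,v>0$ and large shifts $a,b$. Then $t^{-(d-1)}(P-Q)(tu+a,tv+b)\to 0$ as $t\to\infty$, which forces the homogeneous degree-$(d-1)$ part of $P-Q$ to vanish at every $(u,v)\in\ZZ_{>0}^2$. A homogeneous polynomial vanishing on $\ZZ_{>0}^2$ is zero, so the degree-$(d-1)$ parts of $P$ and $Q$ coincide. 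Equating coefficients then gives
\[
\frac{c_i(I)}{(d-i-1)!\,i!}=\frac{d!}{(d-i-1)!\,i!}\sum_F\bigl(\MV_d(\dots\Delta_F^{\ls}\dots)-\MV_d(\dots\Delta_F\dots)\bigr),
\]
which is the formula for $0\ls i\ls d-1$.

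For $i=d$ the left side is $c_d(I)=0$ by \autoref{rem:cd}. On the right side, for each facet both mixed volumes contain $\pyr(F)$ zero times. So they equal $\Vol_d(\Delta_F^{\ls})$ and $\Vol_d(\Delta_F)$, both of which vanish since $\Delta_F$ has at most $d-1$ vertices (a facet must have a bounded direction, else $\ell_F=0$) and hence $\dim\Delta_F^{\ls}<d$. So both sides are zero and the formula holds trivially.

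I expect no serious obstacle beyond bookkeeping. The most delicate point is keeping track of the $o(\cdot)$ errors: the error from \autoref{p:EG_pol} for facets is $o(d-1)$, while the Ehrhart differences themselves may be of full degree $d-1$. These are exactly what is retained, and \autoref{rem:complexitydown} is what allows the shifts implicit in "$u,v\gg0$" to be absorbed.
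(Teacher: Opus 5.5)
Your proposal is correct and follows the paper's own proof. The paper also obtains the cases $0\ls i\ls d-1$ by combining \autoref{l:passing_to_int_closure}, \autoref{p:Inequality}, \autoref{p:EG_pol} and \autoref{thm:mixed_Ehrhart_structure}, and disposes of $i=d$ via \autoref{rem:cd} and \autoref{rem:mixed_vol}; you simply write out the steps the paper leaves implicit, namely the forward difference giving $\binom{d}{i}(d-i)=\frac{d!}{(d-i-1)!\,i!}$, the comparison of degree-$(d-1)$ parts through the $o(d-1)$ error, and the reason $\Vol_d(\Delta_F^{\leqslant})=0$ (every facet has a bounded direction).
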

		\begin{proof} 
			For $i=d$ both terms in the equality are zero by \autoref{rem:cd} and \autoref{rem:mixed_vol}. 
			The equality for $0\ls i\ls d-1$ follows from \autoref{p:Inequality}, \autoref{p:EG_pol}, \autoref{l:passing_to_int_closure}, and \autoref{thm:mixed_Ehrhart_structure}. 
		\end{proof}

		\begin{remark}\label{rem:main_at_c_0}
			We have $\MV_d\bigl(\pyr(F)[d],\,\Delta_F^{\leqslant}\,[0]\bigr) =
			\MV_d\bigl(\pyr(F)[d],\,\Delta_F[0]\bigr)$ whenever $\Delta_F\neq \emptyset$; see \autoref{rem:empty_zero_MV}. Therefore, from \autoref{thm:main}, \autoref{eq:conv_bounded}, and  \autoref{rem:mixed_vol} we obtain that   $c_0(I)$ is the sum of $d!\Vol_d\bigl(\pyr(F)\bigr)$ over the bounded facets of $\Gamma$, thus recovering the main result of  \cite{JMULT_MONOMIAL}. 
		\end{remark}
		
		In the following example we use \autoref{thm:main} to compute the multiplicity sequence of the monomial ideal in \autoref{e:notation_cones}.
		
		\begin{example}\label{e:main}
			Let $I$ be the monomial ideal from \autoref{e:notation_cones}. We begin by computing the mixed volumes of each $\pyr(F_i)$ with the corresponding $\Delta_{F_i}^{\ls}$.
			
			$F_1$: The solid $u\pyr(F_1)+v\Delta_{F_1}^{\ls}$ has vertices \[
			\{(0,0,0),(2u,0,u),(u,u,0),(v,0,0),(2u+v,0,u),(u+v,u,0)\}
			\]
			and volume  $\frac{1}{2}u^2v$. Therefore, the only nonzero mixed volume is 
			$\MV_3\bigl(\pyr(F_1)[2],\,\Delta_{F_1}^{\leqslant}\,[1]\bigr)=\frac16$.
			
			$F_2$: The solid $u\pyr(F_2)+v\Delta_{F_2}^{\ls}$ has vertices \[
			\{(0,0,0),(2u,0,u),(u,u,0),(0,2u,u),(0,0,v),(2u,0,u+v),(0,2u,u+v)\}
			\]
			and volume $\frac23 u^3+2u^2v$. Therefore, the nonzero mixed volumes are 
			$\MV_3\bigl(\pyr(F_2)[3],\,\Delta_{F_2}^{\leqslant}\,[0]\bigr)=
			\MV_3\bigl(\pyr(F_2)[2],\,\Delta_{F_2}^{\leqslant}\,[1]\bigr)=\frac23$.
			
			$F_3$: The computation is the same as with $F_1$ up to exchanging the roles of $x,y$. Therefore, the only nonzero mixed volume is 
			$\MV_3\bigl(\pyr(F_3)[2],\,\Delta_{F_3}^{\leqslant}\,[1]\bigr)=\frac16$.

			Among the solids 
			$u\pyr(F_i)+v\Delta_{F_i}$, the only full-dimensional one is the corresponding to $i=2$. This solid has vertices 
			\[
			\{(0,0,v),(2u,0,u+v),(u,u,v),(0,2u,u+v)\}
			\]
			and  volume  $\frac23 u^3$. Therefore, the only  nonzero mixed volume is  
			$\MV_3\bigl(\pyr(F_2)[3],\,\Delta_{F_2}[0]\bigr)=\frac23$.
			
			Putting it all together in \autoref{thm:main} we obtain $(c_0(I),c_1(I),c_2(I),c_3(I))=(0,6,0,0)$.
			
		\end{example}

		\begin{remark}\label{rem:M2_and_comp}
			In low dimensions, mixed volumes can often be computed directly, as in \autoref{e:main}. In general, it is more practical to use computer  tools, such as the Macaulay2 packages \texttt{Polyhedra} and \texttt{MixedMultiplicity} \cite{Birkner09,GoelMukundanRoyVerma23,MACAULAY2}. We also note that multiplicity sequences can be computed in Macaulay2 using the package \texttt{MultiplicitySequence} \cite{ChenKimMontano24}.	
		\end{remark}

		\section{Counterexample to Achilles-Manaresi's conjecture}\label{s:counterexample}
		
		In order to state the conjecture  we need to introduce some additional terminology. We continue with the notation from \autoref{s:mult-seq}.
		
		For each $0\ls k\ls d-1$, let $\mathscr{F}(k)$ denote the set of facets $F$ of $\Gamma$ (bounded or unbounded)  satisfying the following:
		\begin{enumerate}
			\item[(i)] $F$ has at least $d-(k+1)$ unbounded directions, and 
			\item[(ii)] $F$ contains  at least one bounded face of dimension $k$.
		\end{enumerate}
		We note that $\mathscr{F}(d-1)$ is the set of bounded facets. 
		
		To each pair $F\in \mathscr{F}(k)$ and $k$-dimensional bounded face $G\subseteq F$ we define $\Vol(G,F)$ as follows. 
		Let $M_1,\ldots, M_q$ be the sets of  $d-(k+1)$ unbounded directions of $F$ for which the projection  
		\[
		\pi_i:\RR^d\to \sum_{j\not\in M_i}\RR\fe_j
		\]
		sends $G$ to a $k$-dimensional convex set $\pi_i(G)$. Define
		\[
		\Vol(G,F) = (k+1)!\min_{1\ls i\ls q}\bigl\{\Vol_{k+1}\bigl(\conv(\0,\pi_i(G))\bigr)\bigr\}.
		\]
		
		\begin{conjecture}[{Achilles-Manaresi, \cite[Conjecture 1]{AchillesManaresi2022}}]\label{conj:AM}
			For every $0\ls k\ls d-1$ we have
			\[
			c_{d-(k+1)}(I)=
			\sum_{F\in \mathscr{F}(k)}\min_{\substack{
					\scriptscriptstyle G\subseteq F\, \text{\rm bounded}\\
					\scriptscriptstyle \dim(G)=k}}\bigl\{\Vol(G,F)\bigr\}
			\]
		\end{conjecture}
		
		The conjecture holds for $k=d-1$ by \cite[Theorem 3.2]{JMULT_MONOMIAL} and for $k=0$ by  \cite[(3--5)~and~Theorem~2]{AchillesManaresi2022}. Moreover, $c_d(I)$ always vanishes  in our setting; see \autoref{rem:cd}. Therefore, \autoref{conj:AM} holds up to dimension two. In our next theorem we show that the conjecture fails in dimension three.
		
		\begin{theorem}\label{thm:AM-conj-false}
			\autoref{conj:AM} fails for  $I=(x_1^2x_3,x_2^2x_3,x_1x_2)\subset \kk[x_1,x_2,x_3]_{(x_1,x_2,x_3)}$,  the monomial ideal from \autoref{e:notation_cones}. In particular, the conjecture does not hold in dimension three.
		\end{theorem}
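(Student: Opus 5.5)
The plan is to compare the value of $c_{1}(I)$ predicted by \autoref{conj:AM} with the true value. The true value is already available: in \autoref{e:main} we used \autoref{thm:main} to obtain $(c_0(I),c_1(I),c_2(I),c_3(I))=(0,6,0,0)$, so $c_1(I)=6$. Since $d=3$, the index $d-(k+1)=1$ corresponds to $k=1$. It therefore suffices to show that the right-hand side of \autoref{conj:AM} for $k=1$ is different from $6$.

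\emph{Step 1: the set $\mathscr{F}(1)$.} We use the description of the non-coordinate facets from \autoref{e:notation_cones}:
\[
F_1=\conv(\fv_1,\fv_3)+\RR_{\gs0}\fe_1,\qquad F_2=\conv(\fv_1,\fv_2,\fv_3)+\RR_{\gs0}\fe_3,\qquad F_3=\conv(\fv_2,\fv_3)+\RR_{\gs0}\fe_2.
\]
Each of these has exactly one unbounded direction, so condition (i) holds, since $d-(k+1)=1$. Each also contains a bounded edge, so condition (ii) holds:
\begin{itemize}
\item $F_1$ contains the edge $[\fv_1,\fv_3]$;
\item $F_3$ contains the edge $[\fv_2,\fv_3]$;
\item $F_2$ contains the three edges $[\fv_1,\fv_2]$, $[\fv_1,\fv_3]$, $[\fv_2,\fv_3]$ of its bounded triangle.
\end{itemize}
The remaining facets of $\Gamma$ lie in coordinate planes, and I must check whether any of them could also satisfy (i) and (ii). Such a facet has at least two unbounded directions. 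Its bounded part is a vertex or an edge lying in a coordinate plane, and the candidate edges of $\Gamma$ there are $\fv_1,\fv_3$ and $\fv_2,\fv_3$ paired with rays. I will check, or argue via the definition in \cite{AchillesManaresi2022}, that only non-coordinate facets are meant. In any case, contributions are nonnegative, so extra facets could only change the sum; I will verify directly that they contribute nothing. This bookkeeping is the one place I expect care is needed.

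\emph{Step 2: computing each $\Vol(G,F)$.} For each facet there is a single set $M=\{\fe_j\}$ of unbounded directions, and we project onto the other two coordinates.
\begin{itemize}
\item For $F_1$ we drop $x_1$: $\fv_1\mapsto(0,1)$ and $\fv_3\mapsto(1,0)$. The triangle $\conv(\0,(0,1),(1,0))$ has area $\tfrac12$, so $\Vol(G,F_1)=2!\cdot\tfrac12=1$.
\item For $F_3$, the symmetric computation gives $\Vol(G,F_3)=1$.
\item For $F_2$ we drop $x_3$: $\fv_1\mapsto(2,0)$, $\fv_2\mapsto(0,2)$, $\fv_3\mapsto(1,1)$. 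Every edge projects to a segment, so each is admissible. The resulting triangles with the origin have areas $2$, $1$, $1$, giving values $4,2,2$, whose minimum is $2$.
\end{itemize}

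\emph{Step 3: conclusion.} The conjectured value is $1+2+1=4\neq 6=c_1(I)$, so \autoref{conj:AM} fails for this $I$ with $d=3$. The main obstacle is not the arithmetic but making sure the conjectured sum is interpreted correctly, namely which facets lie in $\mathscr{F}(1)$ and which projections are admissible. I would double-check this against the conventions of \cite{AchillesManaresi2022}. As an independent sanity check on $c_1(I)=6$, one could also run the Macaulay2 package cited in \autoref{rem:M2_and_comp}.
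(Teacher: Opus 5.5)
Your route is the paper's: you determine $\mathscr{F}(1)=\{F_1,F_2,F_3\}$, compute each $\Vol(G,F)$ by projecting away the single unbounded direction, get the predicted value $1+2+1=4$, and compare it with $c_1(I)=6$ from \autoref{e:main}. Your values $\Vol(G_1,F_1)=\Vol(G_2,F_3)=1$ and $\min_G\Vol(G,F_2)=2$ agree with the paper's. Two pieces of the face bookkeeping need correcting, though neither changes the conclusion.

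\emph{The edge $[\fv_1,\fv_2]$.} This segment is not a face of $\Gamma$, so $F_2$ has only the two bounded edges $G_1=[\fv_1,\fv_3]$ and $G_2=[\fv_2,\fv_3]$. The projections $(2,0),(1,1),(0,2)$ you computed are collinear. Hence the triangle $\pol(F_2)=\conv(\fv_1,\fv_2,\fv_3)$ lies in the supporting plane $x_1+x_2=2$ of $F_2$, which contains the direction $\fe_3$. The open segment between $\fv_1$ and $\fv_2$ therefore lies in the relative interior of $F_2=\pol(F_2)+\RR_{\gs0}\fe_3$. Your spurious value $4$ is discarded by the minimum, so the result survives, but the claim as stated is false.

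\emph{The coordinate facets.} The check you defer is genuinely needed. Contributions are nonnegative, so an extra member of $\mathscr{F}(1)$ could only raise the predicted $4$ toward $6$. The check is immediate, however:
$\Gamma\cap\{x_1=0\}=\fv_2+\RR_{\gs0}\fe_2+\RR_{\gs0}\fe_3$,
$\Gamma\cap\{x_2=0\}=\fv_1+\RR_{\gs0}\fe_1+\RR_{\gs0}\fe_3$, and
$\Gamma\cap\{x_3=0\}=\fv_3+\RR_{\gs0}\fe_1+\RR_{\gs0}\fe_2$.
So each coordinate facet has a single vertex as its only bounded face and fails condition (ii). The edges $[\fv_1,\fv_3]$ and $[\fv_2,\fv_3]$ you mention there do not lie in any coordinate plane.
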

		\begin{proof}
			Following the notation in \autoref{e:notation_cones}, we obtain $\mathscr{F}(1)=\{F_1,F_2,F_3\}$. The only one-dimensional bounded face of $F_1$ is $G_1=\conv(\fv_1, \fv_3)$, and similarly 
			$G_2=\conv(\fv_2, \fv_3)$ is the only one-dimensional face of $F_3$. On the other hand, $F_2$ has two one-dimensional faces, $G_1$ and $G_2$. We compute 
			\[
			\Vol(G_1,F_1) = 1,\qquad
			\Vol(G_2,F_3) = 1,\qquad
			\Vol(G_1,F_2) = \Vol(G_2,F_2)=2. 
			\]
			Therefore, \autoref{conj:AM} predicts $c_1(I)=4$. However, $c_1(I)=6$ as computed in \autoref{e:main}. 
		\end{proof}

		\section{Mixed multiplicities of monomial ideals}\label{s:mixed_mult}
		
		We continue with the notation from the previous sections.  
		%
		In our next theorem, we give a mixed-volume for the mixed multiplicities of arbitrary monomial ideals. 
		\begin{theorem}\label{thm:mixed_mult}
			Let $J, I_1,\ldots, I_r\subseteq \kk[x_1,\ldots,x_d]_{(x_1,\ldots, x_d)}$ be nonzero monomial ideals with $J$ $\fm$-primary. Let $Q,P_1,\ldots, P_r$ be the convex hulls of the vertices of the Newton polyhedra of $J, I_1,\ldots, I_r$, respectively. 
			For every $n_0,\ldots, n_r\in \NN$ with $n_0+\ldots + n_r=d-1$ the mixed multiplicity $e_{(n_0,\ldots, n_r)}(J|I_1,\ldots I_r)$ is equal to
			\[ 
			d!\Bigl(
			\MV_d\bigl(
			\conv(\0,Q)[n_0+1], P_1[n_1],\ldots, P_r[n_r]
			\bigr)-
			\MV_d\bigl(
			Q[n_0+1], P_1[n_1],\ldots, P_r[n_r]
			\bigr)
			\Bigr).
			\] 
		\end{theorem}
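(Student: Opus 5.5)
Via \autoref{l:passing_to_int_closure_mixed_mult}, it suffices to count lattice points in $\overline{J^{u_0}I_1^{u_1}\cdots I_r^{u_r}}\setminus \overline{J^{u_0+1}I_1^{u_1}\cdots I_r^{u_r}}$ for $u_0,\ldots,u_r\gg 0$. Let $\Gamma_J=Q+\RR^d_{\gs0}$ and $\Gamma_{I_j}=P_j+\RR^d_{\gs 0}$ be the Newton polyhedra. The Newton polyhedron of a product is the Minkowski sum of Newton polyhedra, and $\overline{\,\cdot\,}$ of a monomial ideal corresponds to the lattice points of its Newton polyhedron (\cite[Propositions~1.4.2 and~1.4.6]{HunekeSwansonIntegral}). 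Hence the length equals
\[
\#\Lat{\Lambda_1\setminus(\Lambda_1+\Lambda_2)},\qquad \Lambda_1=u_0\Gamma_J+u_1\Gamma_{I_1}+\cdots+u_r\Gamma_{I_r},\quad \Lambda_2=\Gamma_J ,
\]
since $(u_0+1)\Gamma_J+\sum u_j\Gamma_{I_j}=\Lambda_1+\Gamma_J$.

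Next I apply \autoref{l:convex_stuff} with $C=\RR^d_{\gs0}$, which is pointed and polyhedral. Take $P_1'=u_0Q+u_1P_1+\cdots+u_rP_r$ (a lattice polytope), so $\Lambda_1=P_1'+C$, and $P_2'=Q$, so $\Lambda_2=Q+C$. Because $J$ is $\fm$-primary, $C\setminus\Gamma_J$ is bounded and $\Gamma_J\subsetneq C$. The lemma then gives
\[
\Lambda_1\setminus(\Lambda_1+\Lambda_2)=\bigl(P_1'+\conv(\0,Q)\bigr)\setminus(P_1'+Q).
\]
Since $P_1'+Q\subseteq P_1'+\conv(\0,Q)$, the count equals
\[
E_{Q,P_1,\ldots,P_r,\conv(\0,Q)}(u_0,u_1,\ldots,u_r,1)-E_{Q,P_1,\ldots,P_r}(u_0+1,u_1,\ldots,u_r).
\]
This is the argument of \autoref{p:passing_to_Ehrhart}, but with one dilation factor fixed to $1$. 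No lattice-point subtlety arises, because we already count lattice points in Minkowski sums of whole polyhedra.

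For the leading terms, I use \autoref{thm:mixed_Ehrhart_structure} with lattice vertices, so both functions are genuine polynomials. Write $\conv(\0,Q)=Q'$ and expand the degree $d$ volume parts:
\[
\Vol_d\Bigl(u_0Q+Q'+\sum_j u_jP_j\Bigr)-\Vol_d\Bigl((u_0+1)Q+\sum_ju_jP_j\Bigr).
\]
The terms of total degree $d$ in $u_0,\ldots,u_r$ cancel. For the degree $d-1$ part, there are two contributions:
\begin{itemize}
\item[(a)] The first summand contributes terms with exactly one copy of $Q'$. By multilinearity, the coefficient of $u_0^{n_0}u_1^{n_1}\cdots u_r^{n_r}$ with $\sum n_i=d-1$ is $\frac{d!}{n_0!\cdots n_r!}\MV_d(Q[n_0],Q'[1],P_1[n_1],\ldots)$.
\item[(b)] The second summand contributes the $u_0$-derivative of the top form, namely $\frac{d!}{n_0!\cdots n_r!}\MV_d(Q[n_0+1],P_1[n_1],\ldots)$.
\end{itemize}
The lower-degree terms of the Ehrhart polynomials also contribute in degree $d-1$. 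However, the degree $d-1$ homogeneous part of $E_{\mathbf Q}(t_0,\ldots)$ coincides for both expressions after the substitution: it depends only on $Q,P_j$ in the $u$-variables, and the shift by $Q'$ or $Q$ only affects lower order. So these cancel. This cancellation is the main point I would need to check carefully.

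The result is the claimed formula, but with $\MV_d(Q[n_0],Q'[1],\ldots)$ in place of $\MV_d(Q'[n_0+1],\ldots)$. This mismatch is the real obstacle. I would resolve it by applying \autoref{l:convex_stuff} differently, with $\Lambda_2=\Gamma_J$ dilated to $u_0\Gamma_J$ and $\Lambda_1=\sum u_j\Gamma_{I_j}+\Gamma_J$-type shifts, in a telescoping over the $J$-index. Alternatively, I would show directly that
\[
\MV(Q'[k],\ldots)-\MV(Q[k],\ldots)=k\bigl(\MV(Q[k-1],Q',\ldots)-\MV(Q[k],\ldots)\bigr)/k\cdots,
\]
which I doubt holds. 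So I would first re-examine the setup and instead sum the Hilbert function in $u_0$. The first sum transform counts $\Lat{\sum u_j\Gamma_{I_j}\setminus(u_0\Gamma_J+\sum u_j\Gamma_{I_j})}$. Lemma \autoref{l:convex_stuff} with $P_1'=\sum u_jP_j$ and $P_2'=u_0Q$ then gives $E_{Q',P_j}-E_{Q,P_j}$ in the variables $(u_0,u_j)$, whose degree $d$ coefficients are exactly $\MV_d(Q'[n_0+1],\ldots)-\MV_d(Q[n_0+1],\ldots)$. Differencing in $u_0$ then yields the theorem.
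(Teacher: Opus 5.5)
Your final paragraph is correct and is exactly the paper's proof. You take the first sum transform in $u_0$, which counts $\Lat{\sum_j u_j\Gamma_{I_j}\setminus(u_0\Gamma_J+\sum_j u_j\Gamma_{I_j})}$, apply \autoref{l:convex_stuff} with $P_1'=\sum_j u_jP_j$ and $P_2'=u_0Q$ to get $E_{\conv(\0,Q),P_1,\ldots,P_r}-E_{Q,P_1,\ldots,P_r}$, and then difference in $u_0$.

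Your first route was also correct, and the ``mismatch'' you worried about is not real. Both routes compute the same length exactly, which forces $\MV_d\bigl(Q[n_0],\conv(\0,Q)[1],P_1[n_1],\ldots,P_r[n_r]\bigr)=\MV_d\bigl(\conv(\0,Q)[n_0+1],P_1[n_1],\ldots,P_r[n_r]\bigr)$. This identity also holds directly: expand both sides as $\frac1d\sum_\nu h_{\conv(\0,Q)}(\nu)\,\MV_{d-1}(\text{faces in direction }\nu)$, where $h$ denotes the support function. Since $h_{\conv(\0,Q)}(\nu)=\max\{0,h_Q(\nu)\}$, and $\conv(\0,Q)$ and $Q$ have the same face in every direction with $h_Q(\nu)>0$, the two expansions agree term by term.
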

		\begin{proof}
			Let $\Lambda=Q+\RR_{\gs 0}^d$ and for each $1\ls i\ls r$ let $\Gamma_i=P_i+\RR_{\gs 0}^d$. 
			We note that 
			\[
			\M(u_0,\ldots, u_r):=
			\#\Latt{
				(u_1\Gamma_1+\cdots+u_r\Gamma_r)\setminus 
				(u_0\Lambda+u_1\Gamma_1+\cdots+u_r\Gamma_r)
			}=
			\lambda_R\left(\frac{\overline{I_1^{u_1}\cdots I_r^{u_r}}}{\overline{J^{u_0}I_1^{u_1}\cdots I_r^{u_r}}}\right).
			\]
			Proceeding as in \autoref{p:passing_to_Ehrhart} via \autoref{l:convex_stuff} we obtain
			\[
			\M(u_0,\ldots, u_r)=
			E_{\conv(\0,Q),P_1,\ldots, P_r}(u_0,\ldots, u_r)-
			E_{Q,P_1,\ldots, P_r}(u_0,\ldots, u_r).
			\]
			We also note that 
			\[
			\lambda_R\left(\frac{\overline{J^{u_0}I_1^{u_1}\cdots I_r^{u_r}}}{\overline{J^{u_0+1}I_1^{u_1}\cdots I_r^{u_r}}}\right)
			= \M(u_0+1,\ldots, u_r)-\M(u_0,\ldots, u_r).
			\]
			The result now follows  by \autoref{thm:mixed_Ehrhart_structure} and \autoref{l:passing_to_int_closure_mixed_mult}.
		\end{proof}
		
		\begin{remark}\label{rem:mixed_non_ic}
			In \cite[Example 6.9]{CidRPolUlr24}, the authors give an example showing that, unlike the multiplicity sequence, mixed multiplicities do not characterize integral dependence. \autoref{thm:main} and \autoref{thm:mixed_mult} shed some light on this distinction. Indeed, for monomial ideals, the multiplicity sequence uses information from all the  facets of the Newton polyhedron, whereas mixed multiplicities only use the bounded facets.
		\end{remark}

		\begin{center}
			{\it Acknowledgments}
		\end{center}
		
		The second author is grateful to Federico Castillo for helpful discussions about mixed volumes of polytopes.  The second author was partially funded by NSF Grant  DMS \#2401522.  
		ChatGPT and Claude were used to find relevant references and to improve the readability of some sentences throughout the article.

		\bibliographystyle{plain}

		\bibliography{References}
		
		
	\end{document}